\newtheorem{theorem}{Theorem}[section]
\newtheorem{definition}[theorem]{Definition}
\newtheorem{lemma}[theorem]{Lemma}
\newtheorem{sub-lemma}[theorem]{Sub-Lemma}
\newtheorem{remark}[theorem]{Remark}
\def\NN{\mathbb{N}}
\def\RR{\mathbb{R}}
\def\ZZ{\mathbb{Z}}
\DeclareMathOperator*{\esssup}{ess\,sup}
\DeclareMathOperator*{\essinf}{ess\,inf}
\DeclareMathOperator{\osc}{osc}
\let\eps=\varepsilon
\def\RR{{\mathbb R}}
\def\v{{\mathbf v}}
\def\p{{\mathbf p}}
\def\q{{\mathbf q}}
\def\h{{\mathbf h}}
\def\1{{{\mathit 1} \!\!\>\!\! I} }
\renewcommand{\limsup}{\mathop{{\overline {\hbox{{\rm lim}}}}}}
\begin{document}

\title
{Map lattices coupled by collisions: hitting time statistics and collisions per lattice unit}
\author{Wael Bahsoun}
\address{Department of Mathematical Sciences, Loughborough University,
Loughborough, Leicestershire, LE11 3TU, UK}
\email{W.Bahsoun@lboro.ac.uk}
\author{Fanni M. S\'elley}
\address{Mathematical Institute of Leiden University, Niels Bohrweg 1
2333 CA Leiden, The Netherlands}
\email{f.m.selley@math.leidenuniv.nl}
\thanks{The research of W. Bahsoun is supported by EPSRC grant EP/V053493/1. The research of W. B. was partially carried out during a sabbatical supported by a Loughborough University Fellowship scheme. The research of F. M. S\'elley was supported by the European Research Council (ERC) under the European Union’s Horizon 2020 research and innovation programme (grant agreement No 787304). The research of F. M. Sélley was partially carried out at Université de Paris and Sorbonne Université, CNRS, Laboratoire de Probabilités, Statistique et Modélisation, F-75013 Paris, France.}
\keywords{Transfer operators, Coupled map lattices, Collisions, Rare events, Chaos per lattice unit}
\subjclass{Primary 37A05, 37E05}
\begin{abstract}
We study map lattices coupled by collision and show how perturbations of transfer operators associated with the spatially periodic approximation of the model can be used to extract information about collisions per lattice unit. More precisely, we study a map on a finite box of $L$ sites with periodic boundary conditions, coupled by collision.
We derive, via a non-trivial first order approximation for the leading eigenvalue of the rare event transfer operator, a formula for the \emph{first collision rate} and a corresponding \emph{first hitting time law}. For the former we show that the formula scales at the order of $L\cdot\eps^2$, where $\eps$ is the coupling strength, and for the latter, by tracking the $L$ dependency in our arguments, we show that the error in the law is of order  $O\left(C(L)\frac{L\eps^2}{\zeta(L)}\cdot\left|\ln  \frac{L\eps^2}{\zeta(L)}\right|\right)$, where $\zeta(L)$ is given in terms of the spectral gap of the rare event transfer operator, and $C(L)$ has an explicit expression. Finally, we derive an \emph{explicit formula} for the first collision rate \emph{per lattice unit}.
\end{abstract}
\date{\today}
\maketitle
\markboth{Wael Bahsoun and Fanni M. S\'elley}{Map lattices coupled by collisions}
\bibliographystyle{plain}
%\tableofcontents
\section{Introduction}\label{sec:into}
Coupled map lattices are discrete-time dynamical systems  modeling the interaction between microscopic units organized in a spatial structure given by a lattice. The state of each unit, called a \emph{site}, evolves according to the interplay between the local dynamics and the effect of the interaction with other sites. Such systems were introduced in the 80's (see \cite{K92} and references therein) and since then they have seen a remarkable amount of research due to their paramount importance in applications and in studying non-equilibrium thermodynamics \cite{BHLLO15, R12}. From a mathematical viewpoint, the main interest of coupled map lattices is that they provide natural examples of dynamical systems with an infinite dimensional state space. Ergodic and statistical properties of coupled maps were studied extensively since the seminal work of \cite{BS88}. Existence and uniqueness of the SRB measure and decay rate of correlations were the questions studied initially, typically in the weak interaction regime. The early results consider smooth expanding or hyperbolic local dynamics with coupling of similar regularity, while piecewise expanding site dynamics with more general coupling schemes (allowing to treat nearest neighbor coupling, for instance) were considered subsequently. See \cite{CF05,KL06} for an extensive list of references. Considering the complete coupling regime, it is natural to expect phase transition-like phenomena. Such results were only proved in case of finite lattices with a small number of sites \cite{KKN92,S19}, however numerics suggest that the existence of multiple positive Lebesgue measure ergodic components can prevail for large system sizes \cite{F14}.  Another line of current research studies coupled maps in more complex spatial structures. After the pioneering work of \cite{KY10}, more realistic setups were considered in the form of heterogeneous networks \cite{PST20}. 

A natural way to understand a system on an infinite lattice is to consider its spatially periodic approximations, that is, coupled maps defined on finite boxes of sites with periodic boundary conditions. By understanding how various important dynamical quantities, such as entropy \cite{CF13}, escape rates in open systems \cite{BF11}, etc., scale with the size of the system it is possible to meaningfully define the amount of ``chaos'' \emph{per lattice unit}\footnote{See \cite{CE99} for an introduction of this concept.} in the system [see \cite{Y13} section 4.2 for a highlight about the importance of this concept in large systems]. A progress in studying the above quantities, entropy and escape rates, has been obtained in \cite{BF11, CF13} for one-dimensional lattices weakly coupled via a convolution operator.

In this work, we study coupled map lattices where the interaction takes place via rare but intense `collisions' and the dynamics on each site is given by a piecewise uniformly expanding map of the interval. Our main goal is to quantify explicitly the amount of chaos per lattice unit in this model. The peculiarity of the model is that this kind of interaction gives rise to a discontinuous coupling function. Many of the technical challenges this poses were tackled in \cite{KL09}, where the existence of a unique Sinai-Ruelle-Bowen measure and exponential decay of correlations within a suitable class of measures was proved via a decoupling technique\footnote{See also \cite{KL05, KL06} where this technique was successfully applied for other types of coupling.}. The  motivation for this model can be understood through the intimate connections with similar models that are used by mathematicians and physicists in the study of heat transfer and the derivation of an associated Fourier's Law \cite{BGNST,GG08}. 

In this paper, we show how spectral techniques of transfer operators associated with the (finite dimensional) periodic approximation of the model \footnote{In the infinite dimensional system the space is not compact. Obtaining `spectral data' for a transfer operator associated with the infinite dimensional system is not obvious at all. On the other hand, this is well understood for a wide range of finite dimensional systems similar to the coupled system arising from the periodic lattice.} can be used to extract information about collisions per lattice unit. In particular, using this technique, a formula for the \emph{first collision rate} with respect to Lebesgue measure; i.e., the fraction of orbits that see a first collision at each time step of the dynamics\footnote{See Definition \ref{def:rate} for a precise statement.}, and a corresponding \emph{hitting time distribution} for the system of $L$ sites are derived. For the former we show that the formula scales at the order of $ L\cdot\eps^2$, where $\eps$ is the coupling strength, and for the latter we show that the error in the law is of order $O\left( C(L) \frac{L\eps^2}{\zeta(L)}\cdot\left|\ln \frac{L\eps^2}{\zeta(L)}\right|\right)$, where $\zeta(L)$ is given in terms of the speed of mixing, and $C(L)$ is a particular function which is obtained by tracking the $L$ dependency in the abstract perturbation result of \cite{KL09'}. Moreover, we derive an \emph{explicit formula} for the first collision rate \emph{per lattice unit}. To the best of our knowledge explicit formulae of this type did not appear before this work in the mathematical literature. We believe that the transfer operator technique employed in this work can be applied to different types of coupled map lattices addressing a related problem, for instance, to derive a precise formula for the escape rate of the open system studied in \cite{BF11}.

In section \ref{sec:set} we introduce the coupled model, state the assumptions on the local dynamics, introduce the corresponding transfer operators and the function space on which the transfer operators act. We finish this section with the statement of our main results, Theorem \ref{thm:global} and Theorem \ref{thm:perunit}. In section \ref{sec:ex} we provide examples to illustrate Theorem \ref{thm:global}; in particular, we describe in certain low dimensional cases the structure of the set of collision states, its limit set as the coupling strength goes to zero, and the formula of the `derivative' for the dominant eigenvalue of a rare event transfer operator. Section \ref{sec:proofs} contains some technical lemmas, their proofs, and the proofs of Theorems \ref{thm:global} and \ref{thm:perunit}. It also contains a remark discussing the infinite dimensional limit of the collision rate and its per lattice unit counterpart.

\textbf{Acknowledgments.} We would like to thank Gerhard Keller for his comments on an earlier version of this work, which were crucial for the content and the presentation of the paper. We further thank Péter Bálint for inspiring discussions, and an anonymous referee for helpful comments.
\section{Setup}\label{sec:set}
Let $I$ denote the closed unit interval $[0,1]$. Let $X=I^{\Lambda}$, where $\Lambda=\ZZ^d\slash (N\ZZ)^d$ for some integers $N,d \geq 1$. As an ease of notation, we are going to write $L=N^d$ from now on. We will refer to $\Lambda$ as the finite lattice of $L$ sites. We denote by $m_{L}$ the $L$-dimensional Lebesgue measure on $X$.

Let $\tau:I\to I$ be the map describing the local dynamics at a single site. Define $T_0:X\to X$ as the product map $(T_0(x))_\p=\tau(x_{\p})$, $\p\in\Lambda$; i.e., $T_0$ is the uncoupled dynamics on the lattice. 

Let $V^+:=\{e_i\}$ be the standard basis of $\RR^d$ and define $V:=V^+ \cup -V^+$. For $\eps>0$, let $\{A_{\eps,-\v}\}_{\v\in V}\subset I$ be a set of disjoint open intervals, each of length $\eps$. Denote by $(a_\v, a_{-\v})$ the point that  $A_{\varepsilon,\v}\times A_{\varepsilon,-\v}$ shrinks to as $\varepsilon \to 0$ and  $S:=\{(a_\v, a_{-\v})\}_{\v\in V^+}$ be the set of all such points. Following \cite{KL09}, we consider the coupling\footnote{In fact the results of this paper apply to a more general form of coupling, regardless how $\Phi_\eps$ is defined on $\{A_{\eps,-\v}\}_{\v\in V}$, see Remark \ref{rem:new}. We present the particular coupling \eqref{coupling} to link and apply our results to existing models in the literature, such as the one introduced in \cite{KL09}.}
\begin{equation}\label{coupling}
(\Phi_\eps(x))_\p=\begin{cases}
       x_{\p+\v} \quad \text{if } x_\p\in A_{\eps,\v}\text{ and } x_{\p+\v}\in A_{\eps,-\v} \text{ for some } \v\in V\\
       x_\p  \quad\quad\text{otherwise}
       \end{cases}
\end{equation}
where $\p+\v$ is understood modulo $N$ coordinatewise. We define the coupled dynamics $T_\eps:X\to X$ as the composition $T_\eps:=\Phi_\eps\circ T_0$.

We assume that $\tau:I\to I$ is piecewise
monotone and $C^{1+\beta}$ on the intervals $I_1,\dots,I_k$ with an extension of similar smoothness to the boundaries and $|\tau'| > 1$. We further assume that $\tau$ has full branches. It is well known that $\tau$ admits a unique absolutely continuous invariant measure $\mu_{\tau}$ whose density $h$ is bounded from below by a positive number on the whole interval $I$ \cite{BG}, and it is furthermore well known to be (Hölder) continuous. 
%\footnote{We can conclude this for example if we assume $|\tau'| > 2$ and $\tau$ is \emph{weakly covering} in the sense that there exists $K \geq 1$ such that $\bigcup_{n=0}^K\tau^n(I_i)=[0,1]$ for $i=1,\dots,k$; \cite{G12}. Also  note that the above assumptions imply that $\tau$ must be onto.}. 
Moreover, to study first order approximations of collision rates, we assume that $\tau$ is continuous at $a_\v$ and $a_{-\v}$ for all $\v\in V$.

\begin{figure}
\centering
    \begin{tikzpicture}[scale=1.2]
    \draw[fill=black!50!white] (0,0) circle (0.8cm);
    \draw[dotted] (0,0) circle (1cm);
    \draw[fill=black!50!white] (2.5,0) circle (0.8cm);
    \draw[dotted] (2.5,0) circle (1cm);
    \draw[dotted] (5,1)  arc(90:270:1);
    \draw[fill=black!50!white] (5,-0.8) -- (5,0.8) arc(90:270:0.8);
    \draw[dotted] (4.75,-1.56) arc(0:180:1);
    \draw[fill=black!50!white] (2.95,-1.56) -- (4.57,-1.56) arc(0:180:0.8);
    \draw[dotted] (2.25,-1.56) arc(0:180:1);
    \draw[fill=black!50!white] (0.45,-1.56) -- (2.05,-1.56) arc(0:180:0.8);
    \draw[dotted] (0.25,1.56) arc(180:360:1);
    \draw[fill=black!50!white] (0.45,1.56) -- (2.05,1.56) arc(360:180:0.8);
    \draw[dotted] (2.75,1.56) arc(180:360:1);
    \draw[fill=black!50!white] (2.95,1.56) -- (4.55,1.56) arc(360:180:0.8);
    \draw[dotted] (-0.25,1.56) arc(360:270:1);
    \draw[fill=black!50!white] (-1.2,0.76) -- (-1.2,1.56) -- (-0.4,1.56) arc(360:270:0.8);
    \draw[dotted] (-0.25,-1.56) arc(0:90:1);
    \draw[fill=black!50!white] (-1.2,-0.76) -- (-1.2,-1.56) -- (-0.4,-1.56) arc(0:90:0.8);
    \filldraw[black] (1.66,0.62) circle (0.2cm);
    \filldraw[black] (1.98,0.86) circle (0.2cm);
    \filldraw[black] (3.7,0.3) circle (0.2cm);
    \filldraw[black] (2.5,-1.3) circle (0.2cm);
    \filldraw[black] (0.2,-0.98) circle (0.2cm);
    \draw[fill=black] (-0.35,1.56) -- (0.05,1.56) arc(360:180:0.2);
    \end{tikzpicture}
    \caption{The Gaspard-Gilbert model \cite{GG08}. Grey circles represent the obstacles, while the black circles are the moving billiard balls.} \label{fig:GG08}
\end{figure}
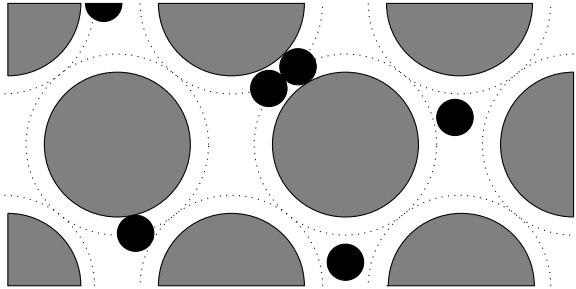

\begin{remark}
Our primary inspiration to study this this type of coupled map lattice was given by the paper \cite{KL09}. In addition to this, an analogy to the billiard model of \cite{GG08} provides further motivation: consider a periodic pattern of smooth scatterers, such as in Figure \ref{fig:GG08}. Among every four of them a cell is formed that contains a moving billiard ball. Thus the cells have the structure of a two-dimensional lattice. The radius of the moving ball is large enough so that the ball is confined to its cell, but small enough so that it is able to collide with the moving balls in each of the four neighbouring cells. When no such interaction happens, the state of each ball (described by its position and velocity) evolves according to classical billiard dynamics. This can be understood as the local dynamics of the sites. When two moving balls collide, they exchange the components of their velocities that are parallel to the line through their centers at the moment of impact. This is similar in spirit to the coupling \eqref{coupling}, where the state of two neighboring sites are interchanged if these states are in an appropriate zone with respect to each other, although in \eqref{coupling} the specific form of the collision is insignificant. As for the local dynamics, following \cite{KL09} we consider the simpler setting of a piecewise uniformly expanding interval maps. The reason for this simplification is that the coupled system will be of similar regularity, but higher dimensional. Transfer operator techniques for multidimensional piecewise expanding maps are well understood \cite{GB89, S00, Li13}, in contrast to high dimensional hyperbolic systems with singularites\footnote{ See the introduction of the recent article \cite{BL21} for an exposition and the current state of the art on transfer operator techniques for (piecewise) hyperbolic systems.}, which would have to be considered in case of the billiard model of \cite{GG08}.  
\end{remark}

In what follows we will build a theory to study the map on the finite lattice of $L$ sites.

\subsection{Transfer operators, first hitting times and first collision rates}

Denote by $L^1_{\RR^{L}}$ the space of measurable functions $f: \RR^{L} \to \RR$ for which
\[
\int_{\RR^{L}} |f| < \infty.
\]
Let $L^\infty$ be the space of essentially bounded measurable functions $f: \RR^{L} \to \RR$.
We will use the notation $\|f\|_1=\int |f|$ for $f \in L^1_{\RR^{L}}$, and $\|f\|_\infty:=\esssup f$. Since $T_0$ is non-singular with respect to $m_L$, we can define the transfer operator associated with $T_0$, denoted by $P_{T_0}$, via the following duality: for $f\in L^1_{\RR^{L}}$ and $g\in L^\infty$
$$\int g\circ T_0\cdot f dm_L=\int g\cdot P_{T_0} f dm_L.$$

Now, define $\mathcal{I}_{L}$ as the set of rectangles $I_{i_1} \times \dots \times I_{i_{L}}$, $i_1,\dots, i_{L} \in \{1,\dots,k\}$ on which $T_0$ is  $C^{1+\beta}$. Then the transfer operator $P_{T_0}$ has the following a.e. pointwise representation:
\[
P_{T_0}f(x)=\sum_{J \in \mathcal{I}_{ L}}\frac{f}{|\det(DT_0)|}\circ (T_0|J)^{-1}(x)1_{T_0(J)}(x).
\]
% Since $\Phi_{\varepsilon}$ is a piecewise permutation of coordinates, we define the transfer operator of $\Phi_{\varepsilon}$ as 
% \[
% P_{\Phi_{\varepsilon}}f(x)=\sum_{y \in \Phi_{\varepsilon}^{-1}(x)}\frac{f(y)}{|\det(D\Phi_{\varepsilon}(y))|)}=\sum_{y \in \Phi_{\varepsilon}^{-1}(x)}f(y).
% \]
% Finally we define the transfer operator of $T_{\varepsilon}$ as 
% \[
% P_{T_{\varepsilon}}=P_{\Phi_{\varepsilon}}P_{T_0}.
% \]

\subsubsection{Rare events and the corresponding transfer operator}
For $\p\in \Lambda$ let 
$$X_{\eps,\v}(\p)=\{x\in X: x_{\p}\notin A_{\eps,\v}\text{ or } x_{\p+\v}\notin A_{\eps,-\v}\},$$
and set 
$$X_{\eps}^0:=\bigcap_{\p\in \Lambda}\bigcap_{\v\in V}X_{\eps,\v}(\p),$$
and
$$H_\eps:=X\setminus X_\eps^0.$$
Let 
$$X^{n-1}_\eps=\cap_{i=0}^{n-1} T_{0}^{-i}X_{\eps}^0.$$
Notice that $X^{n-1}_\eps$ is the set of points whose orbits did not result in any collision up to time $n-1$. 
For $n\ge 1$ and a measurable function $f$, let
\begin{equation}\label{eq:GRET}
\hat{P}^n_{\varepsilon}f= P^n_{T_{0}}(1_{X^{n-1}_{\eps}} f).
\end{equation}
Following the terminology of \cite{K12}, we are going to call $\hat{P}_{\varepsilon}$ the \emph{rare event transfer operator}.

Note that in case of $\eps=0$ we have $X_0^n=X$ for all $n \in \NN$, and hence in this case the operator defined by \eqref{eq:GRET} simply reduces to the transfer operator of $T_{0}$.

It is a natural question to study how long a typical trajectory of the coupled system can survive without experiencing a single collision.
\begin{definition}\label{def:rate}
Let $m$ be a probability measure supported on $X^0_\eps$. Suppose that
$$-\lim_{n \to\infty}\frac{1}{n}\ln m(X_{\eps}^{n-1}):=\hat r$$
exists. Then the quantity in the above limit measures asymptotically, with respect to $m$, the fraction of orbits that see a first collision 
at each time step under the dynamics of $T_\eps$. We call such a quantity the first collision rate under $T_\eps$, with respect to $m$.
\end{definition}
The quantity in Definition \ref{def:rate} captures the fact that the proportion of initial conditions that lead to no collision up to time $n$ shrinks exponentially with rate $\hat{r}$. The motivation behind this definition is to study how $\hat r$, when it exists, scales with the size of the system to meaningfully define the amount of ``chaos'' \emph{per lattice unit}. This is achieved in Theorem \ref{thm:perunit} for the collision models we are interested in.

A natural subsequent question to the fist collision rate is to study how long it takes for a typical trajectory to experience its first collision. We define the \emph{first hitting time} by
$$t_\eps(x)=\inf\{n\ge 0:\,T^n_\eps(x)\in  H_\eps\};$$
i.e., $t_\eps$ is the first time when we see a collision under the coupled map $T_\eps$. Note that $t_\eps$ can be equivalently defined by
$$t_\eps(x)=\inf\{n\ge 0:\,T^n_0(x)\in  H_\eps\}.$$

\begin{remark}\label{rem:new}
The above defined quantities do not take into consideration how the coupling acts on states that involve a collision, since we study trajectories up until the first collision event occurs. More precisely, no coupling takes place on the trajectories we study, so in fact what we study can be viewed as the product map $T_0$ with a specific hole $H_\eps$.\footnote{For more information on systems with holes we refer the reader to \cite{DY} for a basic introduction and a review, to \cite{PU} for a recent account and to \cite{BDT, K12} for their connections with hitting time statistics.}
\end{remark}
The main goal of this work is to understand the first collision rates and hitting time statistics in map lattices coupled by collision as introduced by Keller and Liverani \cite{KL09} and the generalisations of this collision model as described in footnote 1 of \cite{KL09}.

\subsection{Quasi-H\"older spaces}\label{subsec:qh}
We will study the action of the transfer operators defined above on a suitable\footnote{The choice of these spaces among others is almost necessary in this context. Indeed, to apply the perturbation results of \cite{KL09'} in higher dimensions, as in the case of the coupled systems we are dealing with, it is `almost' necessary to use a Banach space that is contained in $L^\infty$.}  Banach spaces. It turns out that for the problem at hand, the most suitable space is the one introduced in \cite{S00}. We start by recalling the function spaces introduced in \cite{S00}.   

Let $0 < \alpha \leq 1$ and $\omega_0 >0$ be parameters. Define the oscillation of $f$ on a Borel set $S \subset \RR^{L}$ as 
\[
\osc(f,S)=\esssup_S f - \essinf_S f.
\]
Let
\[
|f|_{\alpha}=\sup_{0 < \omega \leq \omega_0} \omega^{-\alpha}\int_{\RR^{L}}\osc(f,B_{\omega}(x))dx
\]
and
\[
V_{\alpha}=\left\{ f \in L^1_{\RR^{L}}: |f|_{\alpha} < \infty  \right\}.
\]
Set $\|f\|_{\alpha}=|f|_{\alpha}+\|f\|_{1}$ for $f \in V_{\alpha}$. Then by [\cite{S00}, Proposition 3.3] 
$(V_{\alpha},\|\cdot\|_{\alpha})$ is a Banach space, and the intersection of the unit ball of $V_{\alpha}$ with the set of functions supported on $X$ is compact in $L^1_{\RR^{L}}$. Moreover, by [\cite{S00}, Proposition 3.4] 
$V_{\alpha}$ is continuously injected in $L^\infty$; in particular, $\forall f\in V_\alpha$
\begin{equation}\label{eq:Sausinfty}
    \|f\|_\infty\le\frac{\max(1,\omega_0^\alpha)}{\gamma_{ L}\omega_0^{L}}\|f\|_\alpha,
\end{equation}
where $\gamma_{L}$ denotes the volume of the unit ball of $\RR^{L}$. As this constant will be important to us on multiple occasions, we introduce the notation
\begin{equation}\label{eq:CL}
C(L,\omega_0)=\frac{\max(1,\omega_0^\alpha)}{\gamma_{ L}\omega_0^{L}}.
\end{equation}
\subsection{Notation, collision states and their limit sets}
Now we introduce some notation that are needed in the statements of our main results. All the notation introduced in this subsection is illustrated through simple examples in section \ref{sec:ex}. Recall that $H_{\varepsilon}=X\backslash X_{\varepsilon}^0$ is the set of points whose state results in a collision. 
Let
\begin{equation} \label{eq:Hpv}
H_{\varepsilon}^{\p,\v}=\{ x \in X: x_{\p} \in A_{\varepsilon,\v}, x_{\p+\v} \in A_{\varepsilon,-\v}\}
\end{equation}
for $\p \in \Lambda, \v \in V^+$. This set denotes the set of points whose state results in a collision at site $\p$ in the direction $\v$. Then
\[
H_{\eps}=\cup_{\p \in \Lambda}\cup_{\v \in V^+}H_{\varepsilon}^{\p,\v}.
\]
Note that this is a finite, but, in general, not a disjoint union
as multiple collisions can happen at the same time.

Note that $H_{\varepsilon}^{\p,\v}$ are $L$-dimensional boxes with a base of area $\eps^2$. Their limiting sets, as $\eps\to 0$, will play a crucial role in the statement and proof of Theorem \ref{thm:global}. Denote these limiting sets by $I_{\p,\v}=\lim_{\varepsilon \to 0}H_{\varepsilon}^{\p,\v}$. Explicitly they are given as
\[
I_{\p,\v}=\{ x \in X: x_{\p}=a_\v, x_{\p+\v}= a_{-\v}\}.
\]
\begin{remark}
It is easy to see that $\lim_{\eps\to 0}m_{L}(H_{\eps}^{\p,\v})=m_{ L}(I_{\p,\v})=0$. This implies that $m_{L}(H_{\eps}) \to 0$ as $\eps \to 0$.
\end{remark}
Recall that for each $\v \in V^+$, $(a_\v, a_{-\v})$ denotes the point that  $A_{\varepsilon,\v}\times A_{\varepsilon,-\v}$ shrinks to as $\varepsilon \to 0$ and let $S:=\{(a_\v, a_{-\v})\}_{\v\in V^+}$ be the set of all such points. Let $S^{per} \subset S$ be the set of periodic points in $S$ with respect to the map $T_{2,0}=\tau \times \tau$. 
For each $(a_{\v},a_{-\v}) \in S^{per}$ define $k(\v)$ as the smallest integer $k$ such that $T_{2,0}^{k+1}(a_{\v},a_{-\v}) \in S$. Let $V_k^+ \subset V^+$ denote the set of directions for which $k(\v)=k$ and $K=\{k(\v)\}_{\v \in V^+}$.  Note that the case when $S^{per}\not=\emptyset$ is interesting and important to study since, as the calculation below shows, the collision rate will be higher when the collision zones are centered around periodic points with higher minimal period\footnote{This is inspired by the works of \cite{BY11,KL09'} where this was observed in the study of escape rates in interval maps with holes.}.

\subsection{Statements of the main results}
In the following theorem $L$ is fixed and we understand $o(1) \to 0$ as $\eps \to 0$.
\begin{theorem}\label{thm:global} Fix $L \in \mathbb{N}$. For $\eps\ge0$ sufficiently small, the operator $\hat{P}_\eps: V_\alpha\to V_\alpha$ admits a spectral gap. In particular $T_0$ admits an absolutely continuous invariant measure $\mu_0=\rho_0\cdot m_L$, with $\rho_0(x)=\Pi_{i=1}^{L} h(x_i)$. Moreover, for $\eps>0$,
\begin{enumerate}
\item $\hat{P}_\eps$  has an eigen-function $\rho_\eps>0$, $\int \rho_\eps dx=1$, corresponding to a simple dominant eigenvalue $\lambda_\eps\in(0,1)$, with $$\lambda_{\varepsilon}=1-\mu_0(H_\eps)\cdot\theta(1+o(1)),$$ where 
$$\mu_0(H_\eps)= L\Xi_{\eps}(1+o(1)) \sim \eps^2 L d$$
such that
\[
\Xi_{\eps}=\sum_{\v \in V^+}\int_{A_{\eps,\v}}\int_{A_{\eps,-\v}}h(x_1)h(x_2)dx_1dx_2,
\]
and $\theta$ is a constant. In particular, 
\vskip 0.1cm
\noindent$\bullet$ if $S^{per} \neq \emptyset$ then\footnote{The formula of $\theta$ below is quite general. The reader is invited to go through the examples of section \ref{sec:ex} to see how it reduces significantly in specific settings.}
\begin{equation}\label{eq:theta}
\theta=1-\frac{1}{\sum_{\v \in V^+}h(a_{\v})h(a_{-\v})} \sum_{k \in K}\sum_{\v \in V_k^+}\frac{h(a_{\v})h(a_{-\v})}{|(\tau^{k+1})'(a_{\v})(\tau^{k+1})'(a_{-\v})|};
\end{equation}
\vskip 0.1cm
\noindent$\bullet$ otherwise $\theta=1$.
\vskip 0.1cm
\item The first collision rate of $T_\eps$ with respect to $m_L$ $$\hat r_L:=-\lim_{n\to\infty}\frac{1}{n}\ln m_L(X_\eps^{n-1})$$ 
exists and
$$\hat r_L=\mu_0(H_\eps) \cdot\theta(1+o(1)).$$
\vskip 0.1cm
\item 
In addition, there exists $C>0$, $\xi_\eps>0$, with $\lim_{\eps\to 0}\xi_\eps=\theta$, such that for all $t>0$
$$\left|\mu_0\Big\{t_\eps\ge\frac{t}{\xi_\eps\mu_0(H_\eps)}\Big\}-e^{-t}\right|\le C(t\vee 1)e^{-t} C(L)\frac{Ld \eps^2}{\zeta(L)}\left|\ln\left( \frac{Ld\eps^2}{\zeta(L)}\right)\right|$$
where $\zeta(L)$ is given in terms of the spectral gap of the rare event transfer operator (see \eqref{eq:zeta}) and $C(L)=C(L,\bar c(\tau) L^{-1/2})$ for $C(\cdot,\cdot)$ defined by \eqref{eq:CL}.
\end{enumerate}
\end{theorem}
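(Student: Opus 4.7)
The overall strategy is to treat Theorem \ref{thm:global} as a Keller--Liverani perturbation problem on $V_\alpha$, with $\hat P_\eps$ viewed as a small perturbation of $P_{T_0}$: all three conclusions then fall out of one spectral analysis, the main work being the identification of $\theta$ and the explicit $L$-tracking. I would first establish a Lasota--Yorke inequality for $P_{T_0}$ on $V_\alpha$ by applying the results of \cite{S00} to the product map $T_0=\tau\times\dots\times\tau$; this yields quasi-compactness of $P_{T_0}$ with leading eigenvalue $1$ and invariant density $\rho_0(x)=\prod_{i=1}^{L}h(x_i)$. The same Lasota--Yorke estimates persist for $\hat P_\eps$ because $1_{X_\eps^0}$ merely refines the partition of smoothness, while the perturbation is small in triple norm via $\|P_{T_0}(1_{H_\eps}f)\|_1\le m_L(H_\eps)\|f\|_\infty\le C(L,\omega_0)\,m_L(H_\eps)\,\|f\|_\alpha$ using \eqref{eq:Sausinfty}. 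The abstract perturbation theorem of \cite{KL09'} then yields a simple dominant eigenvalue $\lambda_\eps\in(0,1)$ with positive eigenfunction $\rho_\eps$ and a uniform spectral gap, together with the first-order expansion $1-\lambda_\eps = \mu_0(H_\eps)\theta(1+o(1))$, in which, by the duality $\int 1_{H_\eps}P_{T_0}^n(1_{H_\eps}\rho_0)\,dm_L = \mu_0(H_\eps\cap T_0^{-n}H_\eps)$,
$$\theta = \lim_{N\to\infty}\lim_{\eps\to 0}\Bigl(1 - \sum_{n=1}^{N-1}\mu_0(H_\eps)^{-1}\mu_0(H_\eps\cap T_0^{-n}H_\eps)\Bigr).$$

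To evaluate this limit, since $T_0$ acts coordinatewise and each $H_\eps^{\p,\v}$ is an $\eps^2$-box in coordinates $(\p,\p+\v)$ times $I^{L-2}$, continuity of $h$ at the $a_{\pm\v}$ combined with bounded distortion of $\tau$ reduces $\mu_0(H_\eps\cap T_0^{-n}H_\eps)/\mu_0(H_\eps)$, as $\eps\to 0$, to a contribution only when $T_{2,0}^{n+1}$ carries some $(a_\v,a_{-\v})\in S^{per}$ back to $S$; each such contribution is $h(a_\v)h(a_{-\v})/|(\tau^{n+1})'(a_\v)(\tau^{n+1})'(a_{-\v})|$ normalised by $\sum_{\v\in V^+}h(a_\v)h(a_{-\v})$. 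Summing over the first-return times $k(\v)+1$ delivers \eqref{eq:theta}, and the empty-sum case gives $\theta=1$ when $S^{per}=\emptyset$. The asymptotic $\mu_0(H_\eps)=L\Xi_\eps(1+o(1))\sim dL\eps^2$ follows by counting contributions over $\p\in\Lambda$ and $\v\in V^+$ together with continuity of $h$.

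The first collision rate is then immediate from $m_L(X_\eps^{n-1})=\int\hat P_\eps^n 1\,dm_L\sim c_\eps\lambda_\eps^n$ and the eigenvalue expansion. For the hitting-time statement, I would write $\mu_0\{t_\eps\ge n\}=\int\hat P_\eps^n\rho_0\,dm_L=c_\eps\lambda_\eps^n+R_\eps(n)$ with $c_\eps\to 1$, set $n=\lceil t/(\xi_\eps\mu_0(H_\eps))\rceil$ where $\xi_\eps:=-\log\lambda_\eps/\mu_0(H_\eps)\to\theta$, and compare $\lambda_\eps^n$ with $e^{-t}$. The quantitative Keller--Liverani bounds produce eigenprojector and remainder errors of the form $(\eta_\eps/\zeta(L))\,|\!\ln(\eta_\eps/\zeta(L))|$ with $\eta_\eps\lesssim C(L)Ld\eps^2$; the $(t\vee 1)e^{-t}$ factor arises from the spectral decomposition together with the comparison $\lambda_\eps^n = e^{-t}(1+O(t\mu_0(H_\eps)))$.

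The principal obstacle is the $L$-tracking through the Keller--Liverani estimates: one must isolate the dimension-independent parts of the Lasota--Yorke constants for $\tau$, choose $\omega_0=\bar c(\tau)L^{-1/2}$ to optimise $C(L,\omega_0)$, and propagate these constants through the definition of $\zeta(L)$. A secondary technical point is justifying the interchange of the $N\to\infty$ and $\eps\to 0$ limits in $\theta$, which needs uniform-in-$\eps$ exponential decay of $\mu_0(H_\eps\cap T_0^{-n}H_\eps)$ in $n$---a consequence of the spectral gap of $P_{T_0}$ together with the product structure of $H_\eps$.
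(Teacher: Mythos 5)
Your overall strategy matches the paper's: Lasota--Yorke on $V_\alpha$ via \cite{S00} with $\omega_0^\alpha\sim L^{-1/2}$, a triple-norm perturbation bound of the form $\|(\hat P_\eps-P_{T_0})f\|_1\le C(L,\omega_0)\,m_L(H_\eps)\,\|f\|_\alpha$, the Keller--Liverani machinery for the eigenvalue expansion, and then $m_L(X_\eps^{n-1})=\int\hat P_\eps^n 1$ together with the spectral decomposition to get both the collision rate and the hitting-time law, with $L$-tracking throughout. However, there is a concrete error in your identification of $\theta$.

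You write $\theta=\lim_{N\to\infty}\lim_{\eps\to 0}\bigl(1-\sum_{n=1}^{N-1}\mu_0(H_\eps)^{-1}\mu_0(H_\eps\cap T_0^{-n}H_\eps)\bigr)$, asserting the duality $\int 1_{H_\eps}P_{T_0}^n(1_{H_\eps}\rho_0)\,dm_L=\mu_0(H_\eps\cap T_0^{-n}H_\eps)$. But the quantities $q_{k,\eps}$ appearing in the Keller--Liverani expansion are
\[
q_{k,\eps}=\frac{1}{\mu_0(H_\eps)}\int_{H_\eps}\hat P_\eps^{\,k}\,P_{T_0}(1_{H_\eps}\rho_0)\,dx
=\frac{\mu_0\bigl(H_\eps\cap T_0^{-1}H_\eps^c\cap\dots\cap T_0^{-k}H_\eps^c\cap T_0^{-(k+1)}H_\eps\bigr)}{\mu_0(H_\eps)},
\]
i.e.\ they involve the \emph{rare event} operator $\hat P_\eps$ in the middle, which conditions the orbit to \emph{avoid} $H_\eps$ at times $1,\dots,k$. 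Your version replaces $\hat P_\eps^{k}$ by $P_{T_0}^{n-1}$ and therefore drops the conditioning. This is not innocuous in the $\eps\to 0$ limit. For instance, if $d=1$ and $(a_1,a_{-1})$ is a fixed point of $\tau\times\tau$, the correct computation gives $\theta=1-q$ with $q=1/|\tau'(a_1)\tau'(a_{-1})|$ (only the first-return term survives, since $q_{k}=0$ for $k\ge 1$ by the conditioning), whereas your unconditioned sum produces $1-\sum_{n\ge 1}q^n=1-q/(1-q)$, a different number. Your prose later says ``summing over the first-return times $k(\v)+1$ delivers \eqref{eq:theta},'' which is the right conclusion but contradicts your displayed formula, which would sum over all returns. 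To repair this you must keep the $\hat P_\eps^k$ in the definition of $q_{k,\eps}$, which also removes your worry about interchanging limits: one takes $\eps\to 0$ first for each fixed $k$, and then lets the number of terms $n$ tend to infinity with the tail controlled by $O(\kappa_n)$ coming from the spectral gap of $\hat P_\eps$ — no genuine interchange is performed.

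One further minor point: you also need an argument — as in the paper's inclusion--exclusion analysis — that when evaluating $\lim_{\eps\to 0}\mu_0(\cap_i H_\eps^{\p_i,\v_i})/\mu_0(H_\eps)$ the terms with $\ell\ge 2$ intersecting boxes vanish; this is where the scaling $\mu_0(\cap_{i=1}^\ell H_\eps^{\p_i,\v_i})\sim\eps^{2\ell}$ versus $\mu_0(H_\eps)\sim L d\eps^2$ enters, and it is the reason the formula \eqref{eq:theta} reduces to a single sum over directions rather than picking up contributions from simultaneous collisions.
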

\begin{remark} \label{rem:theta1d}
The formula \eqref{eq:theta} simplifies considerably when $d=1$. In that case $V^+=\{1\}$ and $S=\{(a_1, a_{-1})\}$. Supposing that $k$ is the smallest integer such that $T_{2,0}^{k+1}(a_1, a_{-1})=(a_1, a_{-1})$, we get
\[
\theta=1-\frac{1}{|(\tau^{k+1})'(a_1)(\tau^{k+1})'( a_{-1})|},
\]
independently of the invariant density $h$ of $\tau$.
\end{remark}

\begin{remark}\label{rem:it3}
In the third item of Theorem \ref{thm:global} we provide a bound on the first hitting time law that depends on the time scale $t$. We follow the spectral approach of \cite{K12} (inspired by \cite{A04}) and compute the precise $L$ dependence of the error term. We note that the particular time dependent bound we obtain improves as $t \to \infty$, so it is meaningful for large values $t$: our goal with this statement is to give a bound on the decay of the tail of the hitting time distribution. We note that for small values of $t$, a uniform bound such as the one given in \cite{HSV99} could be more appropriate.
\end{remark}

The proof of Theorem \ref{thm:global} follows closely the abstract argument of \cite{KL09'}. However, since we are interested in how dynamical quantities scale with the lattice size $L$, we carefully track the $L$ dependency of various quantities and error terms. Furthermore, we provide a precise formula for $\theta$ in this specific system (sometimes referred to in the literature as the \emph{extremal index}), a result which is interesting on its own right. Tracking the $L$ dependency in the argument of \cite{KL09'} is useful in particular in identifying explicitly the constants $\zeta(L)$ and $C(L)$ in the third item of Theorem \ref{thm:global}. Moreover, such understanding is essential to progress in the direction of computing the infinite dimensional limit of the first collision rate per lattice unit. See Remark \ref{rem:infinite}.

As the second item of Theorem \ref{thm:global} states, we have 
\begin{equation}\label{eq:escape0}
\hat r_L= \mu_0(H_\eps) \cdot\theta(1+o(1)).
\end{equation}
On the other hand the proof of Theorem \ref{thm:global} also shows that
\begin{equation}\label{eq:escape}
\mu_0(H_{\eps})={ L}\Xi_{\eps}(1+o(1))\sim \eps^2 { L}d
\end{equation}
and consequently $\hat r_L$, scales linearly in $L$. 
Thus, to obtain a \emph{first collision rate per lattice unit} equation \eqref{eq:escape} suggests that a normalization by $L$ is needed. This is made precise in our second main theorem.

\begin{theorem}\label{thm:perunit}
For any $L\ge 2$, the first collision rate per lattice unit is given by
  $$\frac{1}{L} \hat r_L= \Xi_{\eps} \cdot\theta(1+o(1))$$
    as $\eps\to 0$, where $\Xi_\eps$ and $\theta$ are as in Theorem \ref{thm:global}.
\end{theorem}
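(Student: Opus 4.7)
The statement is essentially a direct corollary of Theorem~\ref{thm:global}. My plan is a short bookkeeping argument in which the linear-in-$L$ scaling of the collision measure is isolated and divided out, with the substantive work already contained in the proof of Theorem~\ref{thm:global}.

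First, I would invoke item (2) of Theorem~\ref{thm:global} to obtain, for each fixed $L$, the identity
$$\hat r_L = \mu_0(H_\eps) \cdot \theta\,(1 + o(1))$$
as $\eps \to 0$. Second, I would substitute the explicit first-order expansion of $\mu_0(H_\eps)$ provided by item (1) of the same theorem, namely $\mu_0(H_\eps) = L\, \Xi_\eps\,(1 + o(1))$, and divide both sides by $L$. Using that the product of two $(1+o(1))$ factors is again $1+o(1)$, this yields precisely
$$\frac{1}{L}\hat r_L = \Xi_\eps \cdot \theta\,(1 + o(1)),$$
which is the claim.

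The step worth stressing --- and the one that makes the per-lattice-unit rate meaningful --- is why $\mu_0(H_\eps)$ scales linearly in $L$. The argument uses the decomposition $H_\eps = \bigcup_{\p \in \Lambda} \bigcup_{\v \in V^+} H_\eps^{\p,\v}$ together with the fact that $\rho_0(x) = \prod_{i=1}^L h(x_i)$ is a product density: each $H_\eps^{\p,\v}$ has $\mu_0$-measure $\int_{A_{\eps,\v}} \int_{A_{\eps,-\v}} h(x_1)h(x_2)\,dx_1\,dx_2$, independent of $\p$, so summing over the $L$ sites and the $d$ positive directions in $V^+$ gives exactly $L\,\Xi_\eps$. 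The pairwise intersections $H_\eps^{\p,\v} \cap H_\eps^{\p',\v'}$ carry $\mu_0$-measure of order $\eps^4$ and there are $O(L^2 d^2)$ of them, which for fixed $L$ is absorbed into the $o(1)$ correction via inclusion--exclusion.

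I do not anticipate any genuine obstacle once Theorem~\ref{thm:global} is in hand; the only mild subtlety is that the $o(1)$ in Theorem~\ref{thm:global} implicitly depends on $L$, but this is harmless because Theorem~\ref{thm:perunit} is stated pointwise in $L \geq 2$ and no uniformity in $L$ is claimed (the genuinely uniform-in-$L$ version of such statements is the subject of the infinite-lattice discussion referenced as Remark~\ref{rem:infinite}, which lies outside the scope of this particular corollary).
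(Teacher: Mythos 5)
Your proof is correct and follows essentially the same route as the paper: the paper also derives the result by writing $\tfrac{1}{L}\hat r_L = \tfrac{\hat r_L}{\mu_0(H_\eps)}\cdot\tfrac{\mu_0(H_\eps)}{L\Xi_\eps}$ and invoking $\hat r_L = \mu_0(H_\eps)\,\theta(1+o(1))$ together with $\mu_0(H_\eps)=L\Xi_\eps(1+o(1))$. Your additional paragraph recapping why $\mu_0(H_\eps)$ scales linearly in $L$ just restates the content of Lemma~\ref{lem:mu0H} and is consistent with it, and your caveat about the $L$-dependence of the $o(1)$ matches the paper's remark following the theorem.
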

\begin{remark}
Note that the little $o$ notation in Theorem \ref{thm:perunit} depends on $L$,  so taking the limit $L \to \infty$ is not an obvious task. Again, see Remark \ref{rem:infinite}.
\end{remark}

\section{Examples}\label{sec:ex}
In this section we provide examples to illustrate Theorem \ref{thm:global}. In particular, we describe the structure of the set $H_{\eps}$, its limit when $\eps \to 0$ and the formula of $\theta$ \eqref{eq:theta} for some low dimensional finite lattices.
\subsection{One dimensional lattice}
Let $d=1$. We naturally identify the sites $\p_{1},\dots,\p_{L}$ with the numbers $1\dots, L$. Now $V^+=\{1\}$ and $S=(a_{1},a_{-1})$, so to simplify notation we will denote $H_{\eps}^{\p_i,\v}$ as $H_{\eps}^{i}$ and $I_{\p_i,\v}$ as $I_{i}$, as it causes no confusion.  

Suppose $(a_1,a_{-1})$ is a $k+1$ periodic point of $T_{2,0}$. We now study lattices of some small number of sites.
\begin{itemize}
\item $L=2$: $H_{\eps}^{1}=A_{\eps,1}\times A_{\eps,-1}$ and $H_{\eps}^{2}=A_{\eps,-1}\times A_{\eps,1}$ are two disjoint squares, while $I_{1}=(a_{1},a_{-1})$ and $I_{2}=(a_{-1},a_{1})$. Both squares represent the single possible interaction between the two sites. See Figure \ref{fig:d1N2} for an illustration.

\begin{figure}
\centering
\begin{tikzpicture}
\draw (0,0) -- (4,0) -- (4,4) -- (0,4) -- (0,0);
\draw[very thick] (0.7,0) -- (1.6,0);
\draw[very thick] (0.7,-0.1) -- (0.7,0.1);
\draw[very thick] (1.6,-0.1) -- (1.6,0.1);

\draw[very thick] (2,0) -- (2.9,0);
\draw[very thick] (2,-0.1) -- (2,0.1);
\draw[very thick] (2.9,-0.1) -- (2.9,0.1);

\draw[very thick] (0,0.7) -- (0,1.6);
\draw[very thick] (-0.1,0.7) -- (0.1,0.7);
\draw[very thick] (-0.1,1.6) -- (0.1,1.6);

\draw[very thick] (0,2) -- (0,2.9);
\draw[very thick] (-0.1,2) -- (0.1,2);
\draw[very thick] (-0.1,2.9) -- (0.1,2.9);

\draw[very thick] (0.1,2.45)  -- (-0.1,2.45) node[left] {$A_{\eps,-1}$};
\draw[very thick] (0.1,1.15)  -- (-0.1,1.15) node[left] {$A_{\eps,1}$};
\draw[very thick] (2.45,0.1) node[above] {$a_{-1}$} -- (2.45,-0.1) node[below] {$A_{\eps,-1}$};
\draw[very thick] (1.15,-0.1) node[below] {$A_{\eps,1}$} -- (1.15,0.1) node[above] {$a_{1}$};

\draw[dotted] (0,2.45) -- (1.05,2.45) -- (1.15,0);
\draw[dotted] (0,1.15) -- (2.45,1.15) -- (2.45,0);

\draw (0.7,2) -- (0.7,2.9) -- (1.6,2.9) -- (1.6,2) -- (0.7,2);
\draw (2,0.7) -- (2.9,0.7) -- (2.9,1.6) -- (2,1.6) -- (2,0.7);

%\draw[dotted] (0.6,0) -- (0.6,2);
%\draw[dotted] (1.5,0) -- (1.5,2);
%\draw[dotted] (0,2) -- (0.6,2);
%\draw[dotted] (0,2.9) -- (0.6,2.9);

%\draw[dotted] (0,0.6) -- (2,0.6);
%\draw[dotted] (0,1.5) -- (2,1.5);
%\draw[dotted] (2,0) -- (2,0.6);
%\draw[dotted] (2.9,0) -- (2.9,0.6);

\filldraw (1.05,2.45) circle (1pt);
\filldraw (2.45,1.05) circle (1pt);
\draw (1.05,3.3) node {$H_{\eps}^1$};
\draw (3.4,1.05) node {$H_{\eps}^2$};
\draw (1.3,2.6) node {$I_1$};
\draw (2.7,1.25) node {$I_2$};
\end{tikzpicture}
\caption{The state space when $ L=2$: the system with two interacting sites.} \label{fig:d1N2}
\end{figure}
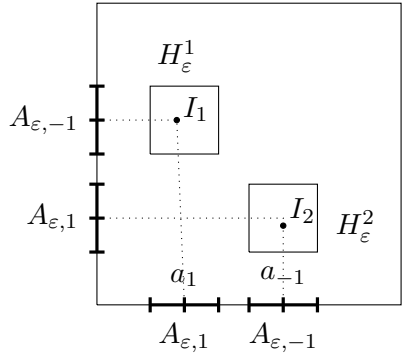

\item $L=3$: $H_{\eps}^{1}=A_{\eps,1}\times A_{\eps,-1}\times I$, $H_{\eps}^{2}=I \times A_{\eps,1}\times A_{\eps,-1}$ and $H_{\eps}^{3}=A_{\eps,-1} \times I \times A_{\eps,1}$, while their limits are the intervals $I_{1}=a_{1}\times a_{-1}\times I$, $I_{2}=I \times a_{1}\times a_{-1}$ and $I_{2}=a_{-1} \times I \times a_{1}$. Notice that since $A_{\eps,1}$ and $A_{\eps,-1}$ are disjoint, the sets $H_{\eps}^i$, $i=1,2,3$ are pairwise disjoint. This translates to the fact that we cannot have multiple interactions simultaneously. The set $H_{\eps}^1$ corresponds to the interaction between sites 1 and 2, $H_{\eps}^2$ to 2 and 3 and $H_{\eps}^3$ to 3 and 1. See Figure \ref{fig:d1N3} for an illustration.
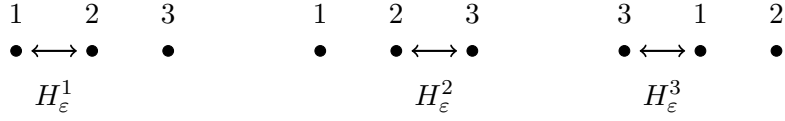
\begin{figure}[ht]
    \centering
    \begin{tikzpicture}
\filldraw (0,0) circle (2pt);
\draw (0,0.5) node {1};
\filldraw (1,0) circle (2pt);
\draw (1,0.5) node {2};
\draw[thick,<->] (0.2,0) -- (0.8,0);
\draw (0.5,-0.6) node {$H_{\eps}^1$};
\filldraw (2,0) circle (2pt);
\draw (2,0.5) node {3};

\filldraw[xshift = 4cm] (0,0) circle (2pt);
\draw[xshift = 4cm] (0,0.5) node {1};
\filldraw[xshift = 4cm] (1,0) circle (2pt);
\draw[xshift = 4cm] (1,0.5) node {2};
\draw[thick,<->,xshift = 4cm] (1.2,0) -- (1.8,0);
\filldraw[xshift = 4cm] (2,0) circle (2pt);
\draw[xshift = 4cm] (2,0.5) node {3};
\draw[xshift = 4cm] (1.5,-0.6) node {$H_{\eps}^2$};

\filldraw[xshift = 8cm] (0,0) circle (2pt);
\draw[xshift = 8cm] (0,0.5) node {3};
\filldraw[xshift = 8cm] (1,0) circle (2pt);
\draw[xshift = 8cm] (1,0.5) node {1};
\draw[xshift = 8cm] (0.5,-0.6) node {$H_{\eps}^3$};
\draw[thick,<->,xshift = 8cm] (0.2,0) -- (0.8,0);
\filldraw[xshift = 8cm] (2,0) circle (2pt);
\draw[xshift = 8cm] (2,0.5) node {2};
\end{tikzpicture}
    \caption{Interactions in case of $L=3$.}
    \label{fig:d1N3}
\end{figure}

\item ${L}=4$: now $H_{\eps}^{1}=A_{\eps,1}\times A_{\eps,-1}\times I \times I$, $H_{\eps}^{2}=I \times A_{\eps,1}\times A_{\eps,-1}\times I$, $H_{\eps}^{3}=I \times I \times A_{\eps,1}\times A_{\eps,-1}$  and $H_{\eps}^{4}=A_{\eps,-1} \times I \times I \times A_{\eps,1}$. Notice that $H_{\eps}^{1}$ and $H_{\eps}^{3}$ have a nonempty intersection, and so does $H_{\eps}^{2}$ and $H_{\eps}^{4}$: 
\begin{align*}
H_{\eps}^{1} \cap H_{\eps}^{3} &=A_{\eps,1}\times A_{\eps,-1}\times A_{\eps,1}\times A_{\eps,-1} \\ 
H_{\eps}^{2} \cap H_{\eps}^{4} &=A_{\eps,-1}\times A_{\eps,1}\times A_{\eps,-1}\times A_{\eps,1}.
\end{align*}
In terms of the coupled system, this means that sites $1$ and $3$ can simultaneously interact with their neighbors to the right, and so can sites $2$ and $4$. See Figure \ref{fig:d1N4}.
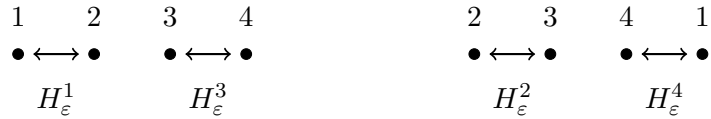
\begin{figure}[ht]
    \centering
    \begin{tikzpicture}
\filldraw (0,0) circle (2pt);
\draw (0,0.5) node {1};
\filldraw (1,0) circle (2pt);
\draw (1,0.5) node {2};
\draw[thick,<->] (0.2,0) -- (0.8,0);
\draw (0.5,-0.6) node {$H_{\eps}^{1}$};
\filldraw (2,0) circle (2pt);
\draw (2,0.5) node {3};
\filldraw (3,0) circle (2pt);
\draw (3,0.5) node {4};
\draw[thick,<->] (2.2,0) -- (2.8,0);
\draw (2.5,-0.6) node {$H_{\eps}^{3}$};

\filldraw[xshift = 6cm] (0,0) circle (2pt);
\draw[xshift = 6cm] (0,0.5) node {2};
\draw[xshift = 6cm] (0.5,-0.6) node {$H_{\eps}^{2}$};
\filldraw[xshift = 6cm] (1,0) circle (2pt);
\draw[xshift = 6cm] (1,0.5) node {3};
\draw[thick,<->,xshift = 6cm] (0.2,0) -- (0.8,0);
\filldraw[xshift = 6cm] (2,0) circle (2pt);
\draw[xshift = 6cm] (2,0.5) node {4};
\filldraw[xshift = 6cm] (3,0) circle (2pt);
\draw[xshift = 6cm] (3,0.5) node {1};
\draw[thick,<->,xshift = 6cm] (2.2,0) -- (2.8,0);
\draw[xshift = 6cm] (2.5,-0.6) node {$H_{\eps}^{4}$};

\end{tikzpicture}
    \caption{Possible simultaneous interactions when $d=1$, ${L}=4$.}
    \label{fig:d1N4}
\end{figure}

\end{itemize}
 According to Remark \ref{rem:theta1d}, we obtain in all three cases studied above
\begin{align*}
\theta=1-\frac{1}{|(\tau^{k+1})'(a_1)(\tau^{k+1})'(a_{-1})|},
\end{align*}
independently of $h$. This is easy to prove when the sets $H_{\eps}^i$ are pairwise disjoint, as in the case of $ L=2$ and $L=3$.

 However, we have to be more careful when the intersections $H_{\eps}^i \cap H_{\eps}^j$, $i \neq j$ are not necessarily empty, although we eventually obtain the same formula. Explaining the reason for this through the example of $L=4$, the key fact the proof will depend on is that the limit sets $I_{1}=a_{1}\times a_{-1}\times I \times I$, $I_{2}=I \times a_{1}\times a_{-1}\times I$, $I_{3}=I \times I \times a_{1}\times a_{-1}$  and $I_{4}=a_{-1} \times I \times I \times a_{1}$ are two dimensional sets, but the intersections
\begin{align*}
I_{1} \cap I_{3} &=a_{1}\times a_{-1}\times a_{1}\times a_{-1} \\ 
I_{2} \cap I_{4} &=a_{-1}\times a_{1}\times a_{-1}\times a_{1}
\end{align*}
are two points with zero two dimensional Lebesgue measure\footnote{In the general case such intersections are not just points, but they will always be negligible relative to the appropriate Lebesgue measure.}, which will prove to be enough for them to be negligible in the computation of $\theta$.

\subsection{Two dimensional lattice}
Let $d=2$ and $L=4$. Denote the sites by $\p_1=(0,0)$, $\p_2=(1,0)$, $\p_3=(0,1)$ and $\p_4=(1,1)$. Then $V^+=\{\v,\h\}$, where $\v=(0,1)$ and $\h=(1,0)$. The set $S$ has two elements: $(a_{\v},a_{-\v})$ and $(a_{\h},a_{-\h})$. Suppose the minimal period of $(a_{\v},a_{-\v})$ is $k+1$, while the minimal period of $(a_{\h},a_{-\h})$ is $\ell+1$. If we study the sets $I_{\p_i,\v_j}$, we find that no three can intersect, but there are eight pairs that can intersect. Namely, $I_{\p_1,\h}$ and $I_{\p_3,\h}$; $I_{\p_2,\h}$ and $I_{\p_4,\h}$; $I_{\p_1,\v}$ and $I_{\p_2,\v}$; $I_{\p_3,\v}$ and $I_{\p_4,\v}$; $I_{\p_2,\h}$ and $I_{\p_3,\h}$;
$I_{\p_1,\h}$ and $I_{\p_4,\h}$;
$I_{\p_1,\v}$ and $I_{\p_4,\v}$ and
$I_{\p_2,\v}$ and $I_{\p_3,\v}$. 
\begin{figure}[h!]
    \centering
    \begin{tikzpicture}
\filldraw (0,0) circle (2pt);
\draw (0,-0.5) node {$\p_1$};
\filldraw (1,0) circle (2pt);
\draw (1,-0.5) node {$\p_2$};
\draw[thick,<->] (0.2,0) -- (0.8,0);
\filldraw (0,1) circle (2pt);
\draw (0,1.5) node {$\p_3$};
\filldraw (1,1) circle (2pt);
\draw (1,1.5) node {$\p_4$};
\draw[thick,<->] (0.2,1) -- (0.8,1);

\filldraw[xshift = 5cm] (0,0) circle (2pt);
\draw[xshift = 5cm] (0,-0.5) node {$\p_1$};
\filldraw[xshift = 5cm] (1,0) circle (2pt);
\draw[xshift = 5cm] (1,-0.5) node {$\p_2$};
\draw[thick,<->,xshift = 5cm] (0,0.2) -- (0,0.8);
\filldraw[xshift = 5cm] (0,1) circle (2pt);
\draw[xshift = 5cm] (0,1.5) node {$\p_3$};
\filldraw[xshift = 5cm] (1,1) circle (2pt);
\draw[xshift = 5cm] (1,1.5) node {$\p_4$};
\draw[thick,<->,xshift = 5cm] (1,0.2) -- (1,0.8);

\end{tikzpicture}
    \label{fig:d2N4}
    \caption{Possible simultaneous interactions when $d=2$, ${L}=4$. Left: corresponds to the intersection of $H_{\eps}^{\p_1,\h}$ and $H_{\eps}^{\p_3,\h}$, $H_{\eps}^{\p_2,\h}$ and $H_{\eps}^{\p_4,\h}$, $H_{\eps}^{\p_1,\h}$ and $H_{\eps}^{\p_4,\h}$ and $H_{\eps}^{\p_2,\h}$ and $H_{\eps}^{\p_3,\h}$; right: corresponds to the intersection of $H_{\eps}^{\p_1,\v}$ and $H_{\eps}^{\p_2,\v}$, $H_{\eps}^{\p_3,\v}$ and $H_{\eps}^{\p_4,\v}$, $H_{\eps}^{\p_1,\v}$ and $H_{\eps}^{\p_4,\v}$ and $H_{\eps}^{\p_2,\v}$ and $H_{\eps}^{\p_3,\v}$.}
\end{figure}

By Theorem \ref{thm:global} we obtain
\begin{align*}
\theta=1&-\frac{1}{h(a_{\v})h(a_{-\v})+h(a_{\h})h(a_{-\h})} \cdot \frac{h(a_{\v})h(a_{-\v})}{|(\tau^{k+1})'(a_{\v})(\tau^{k+1})'(a_{-\v})|} \\
&+\frac{1}{h(a_{\v})h(a_{-\v})+h(a_{\h})h(a_{-\h})} \cdot \frac{h(a_{\h})h(a_{-\h})}{|(\tau^{\ell+1})'(a_{\h})(\tau^{\ell+1})'(a_{-\h})|}.
\end{align*}
So unless there is a close relation between the values at the points $a_{\pm \v}$ and $a_{\pm \h}$ of $h$ and the derivative of $\tau$, this expression depends nontrivially on the density of the site dynamics $\tau$.

\section{Proofs}\label{sec:proofs}
We first compute a formula for the measure of $H_{\eps}$ with respect to the $T_0$-invariant measure $\mu_0=\Pi_{i=1}^{L} h \cdot m$.

\begin{lemma} \label{lem:mu0H}
\[
\mu_0(H_{\eps})= L\Xi_{\eps}(1+o(1))
\]
where
\[
\Xi_{\eps}=\sum_{\v \in V^+}\int_{A_{\eps,\v}}\int_{A_{\eps,-\v}}h(x)h(y)dxdy
\]
and $o(1) \to 0$ as $L\eps^2 \to 0$.
\end{lemma}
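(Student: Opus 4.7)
The plan is to apply inclusion-exclusion to the decomposition $H_\eps = \bigcup_{\p \in \Lambda} \bigcup_{\v \in V^+} H_\eps^{\p,\v}$ and show that every term beyond the leading-order sum is of lower order. For a single piece $H_\eps^{\p,\v}$, the product structure $\mu_0 = \bigl(\prod_{i=1}^L h(x_i)\bigr) m_L$ together with the fact that only coordinates $x_\p$ and $x_{\p+\v}$ are constrained (the remaining $L-2$ factors of $h$ integrating to $1$ on $I$) yields
\[
\mu_0(H_\eps^{\p,\v}) = \int_{A_{\eps,\v}} \int_{A_{\eps,-\v}} h(x) h(y) \, dx \, dy,
\]
which is independent of $\p$. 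Summing over $\p \in \Lambda$ brings in the factor $L$, and summing over $\v \in V^+$ produces the leading term $L \Xi_\eps$.

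The core of the argument is to control the pairwise corrections $\mu_0(H_\eps^{\p,\v} \cap H_\eps^{\p',\v'})$ for $(\p,\v) \neq (\p',\v')$, via a short case analysis based on whether the two site-pairs $\{\p, \p+\v\}$ and $\{\p', \p'+\v'\}$ overlap. The crucial input is the pairwise disjointness of the family $\{A_{\eps,\v}\}_{\v \in V}$. If any site is shared between the two pairs, some coordinate is forced to lie in the intersection of two distinct intervals $A_{\eps,\cdot}$; here the restriction $\v, \v' \in V^+$ excludes the only degenerate possibility $\v = -\v'$, so the intersection is empty. Otherwise the four constrained coordinates are distinct, each confined to an interval of length $\eps$, and the product structure together with $\|h\|_\infty < \infty$ gives the bound $O(\eps^4)$.

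Since there are at most $\binom{Ld}{2} = O(L^2)$ distinct pairs (with $d$ fixed), the total pairwise contribution is $O(L^2 \eps^4)$, and the same counting shows the $k$-fold intersection terms contribute $O((L\eps^2)^k)$. Because $h$ is bounded and bounded below by a positive constant on $I$, one has $\Xi_\eps \asymp \eps^2$, so
\[
\mu_0(H_\eps) = L \Xi_\eps + O(L^2 \eps^4) = L \Xi_\eps \bigl(1 + O(L \eps^2)\bigr),
\]
giving the claimed $o(1)$ error as $L\eps^2 \to 0$. The only non-routine step is the overlap case analysis; once the disjointness of the $A_{\eps,\cdot}$ and the sign convention on $V^+$ are exploited, the remainder is elementary combinatorics and a product-measure computation.
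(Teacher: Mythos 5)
Your proposal is correct and follows essentially the same route as the paper: inclusion–exclusion over the pieces $H_\eps^{\p,\v}$, the product structure of $\mu_0$ to evaluate the leading term as $L\Xi_\eps$, and an $O(\eps^{2k})$ bound on $k$-fold intersections giving a relative error $O(L\eps^2)$. Your explicit case analysis (overlapping site-pairs force a coordinate into two disjoint intervals $A_{\eps,\cdot}$, hence empty intersection) is a useful clarification of a step the paper leaves implicit when it writes the nonempty $k$-fold intersection as a product integral in its bound \eqref{eq:kint}.
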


\begin{proof}
We start by making some observations that will considerably simplify the formula for $\mu_0(H_{\epsilon})$. First notice that for each $\p \in \Lambda$ we must have  $H_\eps^{\p,\v_i} \cap H_\eps^{\p,\v_j} =\emptyset$ for $\v_i \neq \v_j$, as the state of site $\p$ cannot be simultaneously in $A_{\eps,\v_i}$ and $A_{\eps,\v_j}$. Notice also that at most $ L/2$ sets $H_\eps^{\p_i,\v_i}$ can intersect: indeed, $x \in H_\eps^{\p,\v}$ implies that $x_\p \in A_{\eps,\v}$ for some $\v \in  V^{\pm}$ and $x_{\p+\v}$ is in $A_{\eps,-\v}$ where $-\v \in  V^{\mp}$. This  shows that $x \in H_\eps^{\p,\v}$ implies that $x \notin H_\eps^{\p+\v,\v'}$ for any $\v \in  V^{\pm}$. So we can have $x$ in at most $ L/2$ sets $H_\eps^{\p,\v}$ simultaneously. 

Let $\Lambda^k=\{(\p_1,\dots,\p_k): \p_i \in \Lambda,\: i=1,\dots,k\}$ and $(V^+)^k=\{(\v_1,\dots,\v_k): \v_i \in V^+,\: i=1,\dots,k\}$. Fix an ordering on the elements of $\Lambda$ and denote it by $<$. Then according to de Moivre's inclusion-exclusion principle and using the above observations we can write 
\begin{align}
\mu_0(H_{\eps})&=\sum_{\p \in \Lambda}\sum_{\v \in V^+}\mu_0(H_{\eps}^{\p,\v})-\sum_{\substack{\p_1,\p_2 \in \Lambda \\ \p_1 < \p_2}}\sum_{\v_1,\v_2 \in V^+}\mu_0(H_{\eps}^{\p_1,\v_1} \cap H_{\eps}^{\p_2,\v_2}) \nonumber \\
&+\sum_{\substack{\p_1,\p_2,\p_3 \in \Lambda \\ \p_1 < \p_2  < \p_3}}\sum_{\v_1,\v_2,\v_3 \in V^+}\mu_0(H_{\eps}^{\p_1,\v_1} \cap H_{\eps}^{\p_2,\v_2}\cap H_{\eps}^{\p_3,\v_3})-\dots \nonumber \\
&=\sum_{k=1}^{\lfloor {L}/2 \rfloor}(-1)^{k+1}\sum_{\substack{\{\p_i\}_{i=1}^{k} \in \Lambda^k \\ \p_i < \p_{i+1}}}\sum_{\{\v_i\}_{i=1}^{k} \in (V^+)^k}\mu_0\left(\bigcap_{i=1}^k H_{\eps}^{\p_i,\v_i}\right). \label{eq:pm}
\end{align}
Note that some intersections might be empty, but selecting the non-empty ones would make the formula unnecessarily complicated and would provide little benefit at this point. We also note here that the principle applied to a finite measure of density $F$ with respect to the Lebesgue measure implies that
\begin{align}
\int_{H_{\eps}} F(x) dx&=\sum_{\ell=1}^{\lfloor {L}/2 \rfloor}(-1)^{\ell+1}\sum_{\substack{\{\p_i\}_{i=1}^{\ell} \in \Lambda^{\ell} \\ \p_i < \p_{i+1}}}\sum_{\{\v_i\}_{i=1}^{\ell} \in (V^+)^{\ell}}\int_{\cap_{i=1}^{\ell} H_{\eps}^{\p_i,\v_i}}F(x)dx. \label{eq:pmF}
\end{align}
For $k=1$ we get 
\begin{equation} \label{eq:nxi}
\sum_{\p \in \Lambda }\sum_{\v \in V^+}\mu_0(H_\eps^{\p,\v})={L}\sum_{\v \in V^+}\int_{A_{\eps,\v}}\int_{A_{\eps,-\v}}h(x_1)h(x_2)dx_1dx_2={L}\Xi_{\eps}.
\end{equation} 
where
\[
\Xi_{\eps}=\sum_{\v \in V^+}\int_{A_{\eps,\v}}\int_{A_{\eps,-\v}}h(x_1)h(x_2)dx_1dx_2.
\]
Now let $k > 1$. We remind the reader that we denoted the density of the site dynamics $\tau$ by $h$. Since $h$ is of bounded variation, there exists $\overline{C} > 0$ such that $ h \leq \overline{C}$. If the intersection $\cap_{i=1}^k H_{\eps}^{\p_i,\v_i}$ is nonempty we can write
\begin{align*}
&\mu_0\left(\bigcap_{i=1}^k H_{\eps}^{\p_i,\v_i}\right)=\int_{A_{\eps,\v_1}}\int_{A_{\eps,-\v_1}}\dots\int_{A_{\eps,\v_k}}\int_{A_{\eps,-\v_k}}\prod_{i=1}^{2k}h(x_i)dx_1\dots dx_{2k} \\
&=\mu_0(H_{\eps}^{\p_1,\v_1})\int_{A_{\eps,\v_2}}\int_{A_{\eps,-\v_2}}\dots\int_{A_{\eps,\v_k}}\int_{A_{\eps,-\v_k}}\prod_{i=3}^{2k}h(x_i)dx_3\dots dx_{2k}
\end{align*}
which can be bounded by
\begin{equation} \label{eq:kint}  \mu_0\left(\bigcap_{i=1}^k H_{\eps}^{\p_i,\v_i}\right) \leq \overline{C}^{2(k-1)}  \eps^{2(k-1)}\mu_0(H_{\eps}^{\p_1,\v_1})
\end{equation}
Now first notice that
\[
\mu_0(H_{\eps})=\mu_0\left(\bigcup_{\p \in \Lambda}\bigcup_{\v \in V^+}H_{\eps}^{\p,\v}\right) \leq \sum_{\p \in \Lambda} \sum_{\v \in V^+}\mu_0(H_{\eps}^{\p,\v}) = {L}\Xi_{\eps}
\]
by \eqref{eq:nxi}.

For the lower bound we use \eqref{eq:nxi} for $k=1$ and \eqref{eq:kint} for each $k > 1$:
\begin{align*}
&\sum_{k=1}^{\lfloor {L}/2 \rfloor}(-1)^{k+1}\sum_{\substack{\{\p_i\}_{i=1}^{k} \in \Lambda^k \\ \p_i  < \p_{i+1}}}\sum_{\{\v_i\}_{i=1}^{k} \in (V^+)^k}\mu_0\left(\bigcap_{i=1}^k H_{\eps}^{\p_i,\v_i}\right) \\
&={L}\Xi_{\eps}+\sum_{k=2}^{\lfloor {L}/2 \rfloor}(-1)^{k+1}\sum_{\substack{\{\p_i\}_{i=1}^{k} \in \Lambda^k \\ \p_i < \p_j}}\sum_{\{\v_i\}_{i=1}^{k} \in (V^+)^k}\mu_0\left(\bigcap_{i=1}^k H_{\eps}^{\p_i,\v_i}\right) \\
&\geq {L}\Xi_{\eps}-\left(\sum_{k=2}^{\lfloor {L}/2 \rfloor}\sum_{\substack{\{\p_i\}_{i=1}^{k} \in \Lambda^k \\ \p_i < \p_{i+1}}}\sum_{\{\v_i\}_{i=1}^{k} \in (V^+)^k}\mu_0\left(\bigcap_{i=1}^k H_{\eps}^{\p_i,\v_i}\right)\right) \\
&\geq {L}\Xi_{\eps}-\sum_{k=2}^{\lfloor {L}/2 \rfloor}\sum_{\substack{\{\p_i\}_{i=1}^{k} \in \Lambda^k \\ \p_i  < \p_{i+1}}}\sum_{\{\v_i\}_{i=1}^{k} \in (V^+)^k}\overline{C}^{2(k-1)} \eps^{2(k-1)} \mu_0(H_{\eps}^{\p_1,\v_1}) \\
&= {L}\Xi_{\eps}-\sum_{k=2}^{\lfloor {L}/2 \rfloor}\binom{{L}}{k-1}d^{k-1}\sum_{\p_1 \in \Lambda}\sum_{\v_1 \in V^+}\overline{C}^{2(k-1)} \eps^{2(k-1)}\mu_0(H_{\eps}^{\p_1,\v_1}) \\
&= {L}\Xi_{\eps}-\left (\sum_{\p_1 \in \Lambda}\sum_{\v_1 \in V^+} \mu_0(H_{\eps}^{\p_1,\v_1}) \right )\sum_{k=2}^{\lfloor {L}/2 \rfloor}\binom{{L}}{k-1}d^{k-1}\overline{C}^{2(k-1)}\eps^{2(k-1)},
\end{align*}
which can be written as
\begin{align*}
\mu_0(H_{\eps}) \geq {L}\Xi_{\eps}\left(1-\sum_{k=2}^{\lfloor {L}/2 \rfloor}\binom{{L}}{k-1}d^{k-1}\overline{C}^{2(k-1)} \eps^{2(k-1)} \right).
\end{align*}
The first term in the sum $\sum_{k=2}^{\lfloor {L}/2 \rfloor}\binom{{L}}{k-1}d^{k-1}\overline{C}^{2(k-1)}\eps^{2(k-1)}$ is ${L}d\overline{C}^2\eps^2$. This is the leading term assuming ${L}\eps^2$ is sufficiently small, so
\[
\mu_0(H_{\eps}) \geq {L}\Xi_{\eps}(1-C{L}d\eps^2)={L}\Xi_{\eps}(1+o(1)).
\]
\end{proof}

We now turn to the study of the transfer operators $\hat P_{\eps}$. Observe that $T_0=\tau \times \dots \times \tau$ satisfies conditions (PE1)-(PE4) of \cite{S00} for any $\alpha \leq \beta$. Indeed, the sets $U_i$ are given by the rectangles $I_{\underline{i}}=I_{i_1}\times\dots\times I_{i_{ L}}$ where $\underline{i}=(i_1,\dots,i_{L}) \in \{1,\dots,k\}^L$. We further observe that since $\tau$ has full branches, that is, $\tau(I_{j})=I$ for each $j=1,\dots,k$, we have $T_0(I_{\underline{i}})=I^L$ for all $\underline{i} \in \{1,\dots,k\}^L$. We are going to use the notation $T_{0,\underline{i}}^{-1}=(T_0|_{I_{\underline{i}}})^{-1}$ for the inverse branches, defined on $I^L$. On the sets $I_{\underline{i}}$, the map $T_0$ is $C^{1+\beta}$ and can be extended to the boundary as a function of the same smoothness. This means that the first requirement of (PE2) about the piecewise smoothness of $T_0$ is satisfied, while the second requirement asking that for all $\underline{i}$, $\omega \leq \omega_0$, $z \in I^L$ and $x,y \in B_{\omega}(z) \cap I^L$
 \begin{equation} \label{eq:c}
 |\det D_x T_{0,\underline{i}}^{-1}-\det D_y T_{0,\underline{i}}^{-1}| \leq  c_L|\det D_z T_{0,\underline{i}}^{-1}|\omega^{\alpha},
 \end{equation}
 also holds, and we can choose $c_L=c(\tau)\sqrt{L}$. Finally, (PE4) requiring that for all $x,y \in I^L$
 \begin{equation} \label{eq:s}
 d(T_{0,\underline{i}}^{-1}(x),T_{0,\underline{i}}^{-1}(y)) \leq sd(x,y)
 \end{equation}
 holds with $s:=\frac{1}{\min|\tau'|}$.
\begin{lemma}\label{lem:A2A3}
Let $n \in \mathbb{N}$ and $\varepsilon \geq 0$. There exist constants $\sigma\in(0,1)$, $ D(L,\eps, \omega_0) > 0$ such that
\begin{align}
\|\hat{P}_{\varepsilon}^nf\|_1 &\leq \|f\|_1 \label{eq:LY1} \\
\|\hat{P}_{\varepsilon}^nf\|_{\alpha} &\leq \sigma^n \|f\|_{\alpha}+ D(L,\eps, \omega_0)\|f\|_1. \label{eq:LY2}
\end{align}
\end{lemma}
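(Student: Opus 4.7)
The $L^1$ bound \eqref{eq:LY1} is immediate from the definition $\hat{P}_\eps^n f = P_{T_0}^n(1_{X^{n-1}_\eps} f)$: pointwise $|1_{X^{n-1}_\eps} f| \le |f|$, and $P_{T_0}$ is an $L^1$-contraction, so $\|\hat{P}_\eps^n f\|_1 \le \|1_{X^{n-1}_\eps} f\|_1 \le \|f\|_1$.

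For \eqref{eq:LY2}, my plan is to establish a \emph{single-step} Lasota--Yorke inequality for $\hat P_\eps$ and then iterate. A short induction using the duality identity $g\cdot P_{T_0}h=P_{T_0}((g\circ T_0)\cdot h)$ together with $X^{n-1}_\eps\cap T_0^{-n}X^0_\eps=X^n_\eps$ shows the semigroup property $\hat P_\eps^n=(\hat P_\eps)^n$, so iteration is legitimate. The two main inputs for the one-step bound are (i) Saussol's Lasota--Yorke inequality for $P_{T_0}$, available because $T_0$ satisfies (PE1)--(PE4) with $c_L=c(\tau)\sqrt L$ and $s=1/\min|\tau'|$ as recorded in \eqref{eq:c}--\eqref{eq:s}, yielding $\|P_{T_0}g\|_\alpha\le\sigma_0\|g\|_\alpha+C_0\|g\|_1$ for some $\sigma_0\in(0,1)$ (after absorbing $\|g\|_1$ into the full norm, and if needed iterating a fixed number $n_0$ of times before passing to $\hat P_\eps^{n_0}$), and (ii) a product-rule estimate for multiplication by $1_{X^0_\eps}$ in $V_\alpha$.

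For (ii), subadditivity of oscillation gives $\osc(1_Af,B_\omega(x))\le 1_A(x)\osc(f,B_\omega(x))+\|f\|_\infty 1_{\{B_\omega(x)\cap\partial A\neq\emptyset\}}$, which integrates to $|1_A f|_\alpha\le|f|_\alpha+\|f\|_\infty|1_A|_\alpha$. Taking $A=X^0_\eps$ and exploiting the structure $H_\eps=\bigcup_{\p,\v}H_\eps^{\p,\v}$ (each summand a box of sides $\eps,\eps,1,\dots,1$ with four $(L-1)$-dimensional faces of volume $\eps$ orthogonal to the collision coordinates and $2(L-2)$ lower-order faces of volume $\eps^2$), I would estimate $m_L(\{x:\dist(x,\partial X^0_\eps)<\omega\})\le \bar C L d\eps\,\omega$ for $\omega$ small, giving $|1_{X^0_\eps}|_\alpha\le\bar C L d\eps\,\omega_0^{1-\alpha}$. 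Combining (i), (ii) and the Saussol embedding \eqref{eq:Sausinfty} to convert $\|f\|_\infty$ into $C(L,\omega_0)\|f\|_\alpha$ produces, for the one-step operator,
\[
\|\hat P_\eps f\|_\alpha\le\sigma_0\bigl(1+\bar C L d\eps\,\omega_0^{1-\alpha}\,C(L,\omega_0)\bigr)\|f\|_\alpha+(C_0+1)\|f\|_1 .
\]
For $L$ and $\omega_0$ fixed and $\eps$ sufficiently small the parenthesis is close to $1$, so $\sigma:=\sigma_0\bigl(1+\bar C L d\eps\,\omega_0^{1-\alpha}C(L,\omega_0)\bigr)$ lies in $(0,1)$; iterating this estimate and using \eqref{eq:LY1} at every step delivers \eqref{eq:LY2} with $D(L,\eps,\omega_0)=(C_0+1)/(1-\sigma)$.

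The main obstacle is not the existence of such constants but tracking their precise $L$-dependence, since this is what will drive the subsequent perturbation arguments. Saussol's constants inherit the factor $c(\tau)\sqrt L$ through (PE2), the embedding constant $C(L,\omega_0)$ blows up like $\omega_0^{-L}/\gamma_L$ by \eqref{eq:CL}, and the boundary contribution grows like $Ld\eps$. Balancing these competing behaviors, as implicit in the choice $\omega_0=\bar c(\tau)L^{-1/2}$ that appears in Theorem \ref{thm:global}(3), is what determines the explicit shape of $D(L,\eps,\omega_0)$ that will later feed into the hitting-time error bound.
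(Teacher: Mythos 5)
There is a genuine gap that would undermine the rest of the paper. Your product-rule estimate $|1_A f|_\alpha \le |f|_\alpha + \|f\|_\infty |1_A|_\alpha$ is weaker than what is actually needed, and repairing it with the embedding \eqref{eq:Sausinfty} moves the damage to exactly the wrong place. After converting $\|f\|_\infty$ to $C(L,\omega_0)\|f\|_\alpha$, the boundary contribution $\bar C Ld\eps\,\omega_0^{1-\alpha}$ enters \emph{multiplicatively} in the coefficient of $\|f\|_\alpha$: your one-step contraction constant becomes
\[
\sigma=\sigma_0\bigl(1+\bar C L d\eps\,\omega_0^{1-\alpha}C(L,\omega_0)\bigr),
\]
which depends on $\eps$ and, because $C(L,\omega_0)=\max(1,\omega_0^\alpha)/(\gamma_L\omega_0^L)$ is exponentially large in $L$, requires $\eps$ to be exponentially small in $L$ merely to have $\sigma<1$. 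In the lemma as stated the $\eps$- and $L$-dependence is deliberately isolated in $D(L,\eps,\omega_0)$, and the paper indeed obtains a $\sigma$ depending only on $\tau$; this uniformity is what the subsequent analysis leans on (the bound $\|\rho_\eps\|_\alpha\le B(L,\eps,\omega_0)/(\lambda_\eps-\sigma)$ in \eqref{eq:rhobound}, the uniform essential-spectral-radius bound, the Keller--Liverani stability input, and, most importantly, the whole enterprise of tracking $L$-dependence). The fix is to use Saussol's multiplicative estimate as recorded in \eqref{eq:Saus1}, namely $\|fg\|_\alpha \le |f|_\alpha\|g\|_\infty + \|f\|_1 |g|_\alpha + \|f\|_1\|g\|_\infty$ from [S00, Proposition 3.4]. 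With $g=1_{X_\eps^0}$ the cross term comes attached to $\|f\|_1$, not $\|f\|_\infty$, so the boundary contribution $8Ld\eps\,\omega_0^{1-\alpha}$ lands in the additive constant $D$ and $\sigma$ stays put. The difference in the two product estimates is not cosmetic: Proposition 3.4 gains $\|f\|_1$ in place of $\|f\|_\infty$ by averaging $\essinf_{B_\omega}|f|$ over $x$ rather than taking a sup, and no embedding is needed.

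A second, smaller, issue concerns step (i). Invoking Saussol's general Lasota--Yorke inequality (Lemma 4.1 of [S00]) for $P_{T_0}$ brings in the complexity term governed by (PE5), which for the product map $T_0$ grows quadratically in $L$; the resulting constraint $s^\alpha+\frac{4s}{1-s}\cdot\frac{L(L-3)}{2\sqrt\pi}<1$ caps $L$. Iterating a fixed $n_0$ steps does not cure this, since the obstruction is combinatorial (number of pieces meeting near a boundary), not dynamical. The paper avoids it by a short direct computation exploiting that $\tau$ is piecewise onto, so $T_0(I_{\underline i})=I^L$ and all indicator terms from branch images vanish; this is the content of the displayed oscillation chain in the paper's proof, yielding $|P_{T_0}f|_\alpha\le(1+c_Ls^\alpha\omega_0^\alpha)s^\alpha|f|_\alpha+c_Ls^\alpha\|f\|_1$, and the choice $\omega_0^\alpha\sim L^{-1/2}$ then makes $\sigma$ and $K$ genuinely uniform in $L$. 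Your observation that the semigroup identity $\hat P_\eps^n=(\hat P_\eps)^n$ deserves a check is correct and is tacit in the paper; your proof of \eqref{eq:LY1} and the boundary-volume estimate $\sim Ld\eps\omega$ are both fine.
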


\begin{proof}
Since
\[
\|\hat{P}_{\varepsilon}f\|_1=\int_X |P_{T_0}(1_{X_{\varepsilon}^0}f)| \leq \int_X |1_{X_{\varepsilon}^0}f| \leq \int_X |f|,
\]
we have $\|\hat{P}_{\varepsilon}^nf\|_1 \leq \|f\|_1$ for all $n \in \mathbb{N}$ and $\varepsilon \geq 0$.

To prove \eqref{eq:LY2} we show the following:
\begin{align}
\|P_{T_0}f\|_{\alpha}&\leq \sigma \|f\|_{\alpha}+K\|f\|_1 \label{eq:LY2a} ;\\
\|1_{X_{\varepsilon}^0}f\|_{\alpha}&\leq \|f\|_{\alpha}+ 8 Ld\varepsilon\omega_0^{1-\alpha}\|f\|_1, \label{eq:LY2b} 
\end{align}
Writing $B(L,\eps,\omega_0)=\sigma\cdot 8 Ld\varepsilon\omega_0^{1-\alpha}+K$, the inequalities \eqref{eq:LY2a} and \eqref{eq:LY2b} imply that
\begin{equation}\label{eq:LYoneit}
\|\hat{P}_{\varepsilon}f\|_{\alpha} \leq \sigma\|f\|_{\alpha}+ B(L,\eps,\omega_0)\|f\|_1,
\end{equation}
which proves \eqref{eq:LY2} by iteration with $D(L,\varepsilon,\omega_0)=B(L,\varepsilon,\omega_0)\sum_{k=0}^{\infty}\sigma^k$. 

 We first prove \eqref{eq:LY2a}, arguing similarly as in Lemma 4.1 of \cite{S00}, but in a simplified setting since $T_0$ is piecewise onto\footnote{In fact we include a detailed proof of \eqref{eq:LY2a} instead of using the general statement of Lemma 4.1 of \cite{S00} two reasons. First to track the $L$ dependency in all the constants. Second to show how assumptions on $\tau$ make it possible to avoid  competition between the $L$ dependent complexity (which appears in the general argument of \cite{S00}, see (PE5) and Lemma 2.1 of \cite{S00}) and the expansion of $\tau$. Indeed, had we used the general statement of Lemma 4.1 of \cite{S00}, we would have the following constraint: $s^{\alpha}+\frac{4s}{1-s}\cdot \frac{L(L-3)}{2\sqrt{\pi}} < 1$, when $ L> 3$, which obviously would put a restriction on $L$ considering $s$ fixed.}.
\begin{align*}
\osc(P_{T_0}f,B_{\omega}(x)) &\leq \sum_{\underline{i}}\osc\left(\left(\frac{f}{|\det DT_0|} \right) \circ T_{0,\underline{i}}^{-1} \cdot \mathbf{1}_{T_0 (I_{\underline{i}})},B_{\omega}(x) \right) \\
&=\sum_{\underline{i}}\osc\left(\left(\frac{f}{|\det DT_0|} \right) \circ T_{0,\underline{i}}^{-1},B_{\omega}(x) \right) \\
&\leq \sum_{\underline{i}}\osc\left(\frac{f}{|\det DT_0|} ,I_{\underline{i}} \cap T_0^{-1}B_{\omega}(x) \right) \\
&\leq \sum_{\underline{i}}\osc\left(\frac{f}{|\det DT_0|} ,I_{\underline{i}} \cap B_{s\omega}(y_{\underline{i}}) \right),
\end{align*}
setting $y_{\underline{i}}=T_{0,\underline{i}}^{-1}(x)$ and remembering that $s$ was defined as the reciprocal of the minimum expansion of $\tau$ (and consequently $T_0$). It is easy to prove that for $g_1, g_2 \in L^{\infty}$, $g_2>0$ and some $S$ Borel set
\[
\osc(g_1g_2,S) \leq \osc(g_1,S)\esssup_S g_2 + \osc(g_2,S)\essinf_S |g_1|
\]
holds. (The reader is invited to check Proposition 3.2 (iii) in \cite{S00}.) This implies that
\begin{align*}
\osc(P_{T_0}f,B_{\omega}(x)) &\leq \sum_{\underline{i}}\osc\left(f,B_{s\omega}(y_{\underline{i}}) \right) \esssup_{I_{\underline{i}}\cap B_{s\omega}(y_{\underline{i}})}\frac{1}{|\det DT_0|}\\
&\quad +\osc\left(\frac{1}{|\det DT_0|},I_{\underline{i}}\cap B_{s\omega}(y_{\underline{i}}) \right) \essinf_{ B_{s\omega}(y_{\underline{i}})}|f| \\
&\leq \sum_{\underline{i}}(1+ c_Ls^{\alpha}\omega^{\alpha})\osc\left(f,B_{s\omega}(y_{\underline{i}}) \right)\frac{1}{|\det DT_0|}(y_{\underline{i}})\\
&\quad + c_Ls^{\alpha}\omega^{\alpha}\frac{1}{|\det DT_0|}(y_{\underline{i}}) |f|(y_{\underline{i}}) \\
&\leq (1+ c_Ls^{\alpha}\omega^{\alpha})P_{T_0}(\osc(f,B_{s\omega}( x)))+ c_Ls^{\alpha}\omega^{\alpha}P_{T_0}|f| (x),
\end{align*}
where $c_L$ and $s$ are given by \eqref{eq:c} and \eqref{eq:s}, respectively. Integrating this on $\RR^L$ we get
\[
\int_{\RR^L} \osc(P_{T_0}f,B_{\omega}(\cdot)) \leq (1+ c_Ls^{\alpha}\omega^{\alpha})\int_{\RR^L}\osc(f,B_{s\omega}(\cdot))+ c_Ls^{\alpha}\omega^{\alpha}\int_{\RR^L}|f|,
\]
giving
\[
|P_{T_0}f|_{\alpha} \leq (1+c_Ls^{\alpha}\omega_0^{\alpha})s^{\alpha}|f|_{\alpha}+ c_Ls^{\alpha}\|f\|_1
\]
which gives \eqref{eq:LY2a} provided that $\omega_0$ is small enough. We note that choosing $\omega_0^{\alpha} \sim L^{-1/2}$ allows to choose $ c_Ls^{\alpha}\omega_0^{\alpha}$, thus $\sigma$ and $K$ uniformly in $L$. Assume from now on that we fix a choice of $\bar c$ and let $\omega_0$ such that $\omega_0^{\alpha}= \bar cL^{-1/2}$, $ c_Ls^{\alpha}\omega_0^{\alpha}=\delta$ for some fixed $\delta >0$ sufficiently small so that $(1+\delta)s^{\alpha}=:\sigma < 1$.

To prove \eqref{eq:LY2b}, we are going to use that for $f,g \in V_{\alpha}$
\begin{equation} \label{eq:Saus1}
\|fg\|_{\alpha} \leq |f|_{\alpha}\|g\|_{\infty}+\|f\|_{1}|g|_{\alpha}+\|f\|_{1}\|g\|_{\infty},
\end{equation}
as argued in the proof of Proposition 3.4, \cite{S00}. Let $f \in V_{\alpha}$ and $g=1_{X_{\varepsilon}^0}$. Recall that $X_{\eps}^0=X\backslash H_{\eps}$. Notice that $\osc(g,B_{\omega}(x)) = 1$ if $x \in B_{\omega}(\partial X_{\varepsilon}^0)=B_{\omega}(\partial H_{\varepsilon})$. Recall also that $H_{\eps}=\cup_{\v \in V^+}\cup_{\p \in \Lambda} H_{\varepsilon}^{\p,\v}$, where the union is not necessarily disjoint. Nevertheless, $\osc(g,B_{\omega}(x))$ is either 1 or 0 for $x \in B_{\omega}(\partial H_{\varepsilon}^{\p,\v})$ for each $\p \in \Lambda$ and $\v \in V^+$, and 0 elsewhere. This implies that
\begin{align*}
\int_{\RR^{ L}}\osc(g,B_{\omega}(x))dx & \leq\sum_{\v \in V^+}\sum_{\p \in \Lambda}  m_L(B_{\omega}(\partial H_{\varepsilon}^{\p,\v})) \\
&\leq  Ld((\varepsilon+2\omega)^2-{(\varepsilon-2\omega)^2})=  8Ld\varepsilon\omega,
\end{align*}
and thus
\[
\sup_{0 < \omega \leq \omega_0}\omega^{-\alpha}\int_{\RR^{ L}}\osc(g,B_{\omega}(x))dx \leq  8 Ld\varepsilon\omega_0^{1-\alpha}.
\]
So $g \in V_{\alpha}$ indeed, and by \eqref{eq:Saus1} we obtain
\begin{align*}
\|f1_{X_{\varepsilon}^0}\|_{\alpha}& \leq 
|f|_{\alpha}\|1_{X_{\varepsilon}^0}\|_{\infty}+\|f\|_1 |1_{X_{\varepsilon}^0}|_{\alpha}+\|f\|_1 \|1_{X_{\varepsilon}^0}\|_{\infty}
\\
&\leq |f|_{\alpha} + \|f\|_1 + \|f\|_1 |1_{X_{\varepsilon}^0}|_{\alpha} \\
&\leq \|f\|_{\alpha}+8 Ld\varepsilon\omega_0^{1-\alpha}\|f\|_{1}.
\end{align*}
\end{proof}

 Our choice of $\omega_0$ implied that $\omega_0^{\alpha}L^{1/2} \sim 1$. In turn, $L\omega_0^{1-\alpha} \sim L^{\frac{3}{2}-\frac{1}{2\alpha}}<1$ for sufficiently small $\alpha$. We now fix the value of $\alpha$ such that $\frac{3}{2}-\frac{1}{2\alpha} < 0$, that is, $\alpha \leq \min\{\beta,1/3\}$. Then $B(L,\eps,\omega_0) \leq 2\sigma+O(\eps)+K$, in particular
\begin{equation} \label{eq:B}
B(L,\eps,\omega_0) \leq C \quad \text{for $\eps$ sufficiently small.}
\end{equation}

\begin{remark}\label{Re:2}
Note that $f\ge 0$ with $\int_Xfdx=1$, there exists an $n\ge 0$, such that
$$\|\hat{P}^n_{\varepsilon}f\|_1 =\int_{X}P^n_{T_{0}}(1_{X^{n-1}_{\eps}} f)dx=\int_{X}1_{X^{n-1}_{\eps}} fdx<\int_{X} fdx=\|f\|_1.$$
\end{remark}
\begin{lemma}\label{lem:A4} Let $C(L,\omega_0)$ be the constant defined by \eqref{eq:CL}. Then
$$\sup_{\|f\|_{\alpha}\le 1} \|(\hat{P}_{\varepsilon}-P_{T_0})f\|_1\le C(L,\omega_0)\cdot Ld\eps^2.$$
\end{lemma}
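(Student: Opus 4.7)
The plan is to exploit the fact that $\hat P_\eps$ and $P_{T_0}$ differ only by the indicator $1_{X_\eps^0}$ before pushing forward. From the definition \eqref{eq:GRET} with $n=1$, we have $\hat P_\eps f = P_{T_0}(1_{X_\eps^0}f)$, so
\[
(\hat P_\eps - P_{T_0})f = P_{T_0}\bigl((1_{X_\eps^0}-1)f\bigr) = -P_{T_0}(1_{H_\eps}f),
\]
since $X_\eps^0 = X \setminus H_\eps$. Taking $L^1$-norms and using the standard fact that $P_{T_0}$ is an $L^1$-contraction (already invoked in Lemma \ref{lem:A2A3}) gives
\[
\|(\hat P_\eps - P_{T_0})f\|_1 \le \|1_{H_\eps}f\|_1 = \int_{H_\eps}|f|\,dm_L.
\]

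Next I would use the continuous injection $V_\alpha \hookrightarrow L^\infty$ from \eqref{eq:Sausinfty}, which states $\|f\|_\infty \le C(L,\omega_0)\|f\|_\alpha$. For $\|f\|_\alpha\le 1$ this yields $\int_{H_\eps}|f|\,dm_L \le C(L,\omega_0)\,m_L(H_\eps)$. It remains to estimate $m_L(H_\eps)$ by a crude union bound: since
\[
H_\eps = \bigcup_{\p\in\Lambda}\bigcup_{\v\in V^+} H_\eps^{\p,\v},
\]
and each $H_\eps^{\p,\v}$ is a product of two intervals of length $\eps$ with $L-2$ copies of the unit interval $I$, we have $m_L(H_\eps^{\p,\v})=\eps^2$. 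Because $|\Lambda|=L$ and $|V^+|=d$, this gives
\[
m_L(H_\eps) \le \sum_{\p\in\Lambda}\sum_{\v\in V^+} m_L(H_\eps^{\p,\v}) = Ld\eps^2.
\]

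Chaining the three estimates produces the claimed bound $\|(\hat P_\eps - P_{T_0})f\|_1 \le C(L,\omega_0)\cdot Ld\eps^2$. There is no real obstacle here; the only thing to keep track of is that the $L$-dependence enters both through $C(L,\omega_0)$ (via the $L^\infty$-injection constant) and through the combinatorial count $Ld$ of the boxes $H_\eps^{\p,\v}$, while the factor $\eps^2$ comes solely from the two-dimensional base of each box. The cruder union bound (as opposed to the more refined inclusion-exclusion computation of Lemma \ref{lem:mu0H}) is enough because here we only need an upper estimate and the subsequent perturbative arguments will absorb the resulting $L$-dependent constant into $C(L)$.
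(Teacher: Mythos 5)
Your proposal is correct and follows essentially the same route as the paper: rewrite $(\hat P_\eps - P_{T_0})f = -P_{T_0}(1_{H_\eps}f)$, use the $L^1$-contraction of $P_{T_0}$, bound $\|f\|_\infty$ via the injection \eqref{eq:Sausinfty}, and finish with the union bound $m_L(H_\eps)\le Ld\eps^2$. No gaps.
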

\begin{proof}
Using  \eqref{eq:Sausinfty} we get:
\begin{equation*}
    \begin{split}
      \|(\hat{P}_{\varepsilon}-P_{T_0})f\|_1&= \| P_{T_0}(1_{X_\eps }f)-P_{T_0}f\|_1\le||1_{ H_\eps}f||_1\\
      &\le | H_\eps|\cdot\|f\|_\infty\le C(L,\omega_0) | H_\eps|\cdot\|f\|_{\alpha}.
    \end{split}
\end{equation*}
The proof is then completed by noting that $| H_\eps|= \left|\cup_{\v \in V^+}\cup_{\p \in \Lambda} H_{\varepsilon}^{\p,\v}\right|\leq Ld\eps^2$.
\end{proof}
 
\begin{proof}[Proof of Theorem \ref{thm:global}]
By Lemma \ref{lem:A2A3} and the fact that the unit ball of $(V_{\alpha},\|\cdot\|_{\alpha})$ is compact in the weak norm $\|\cdot\|_1$, the result of \cite{He} implies the residual spectrum of each operator $\hat{P}_{\varepsilon}$, $\eps\ge 0$ is contained in $\{z \in \mathbb{C}: |z| \leq \sigma\}$. The map $T_0$ admits an invariant density $\rho_0(x)=\Pi_{i=1}^{L} h(x_i)$.  Since $h$  is mixing, $\rho_0$ is mixing. This with the quasi-compactness of $P_0$ implies $P_0$ has a spectral gap when acting on $V_\alpha$. By Lemmas \ref{lem:A2A3} and \ref{lem:A4}, and the abstract perturbation result of \cite{KL99} the operator $\hat{P}_{\eps}:V_\alpha\to V_\alpha$, $\eps>0$ has a spectral gap with a leading simple eigenvalue $\lambda_{\eps}$ and by Remark \ref{Re:2} $\lambda_\eps\in (\sigma,1)$ holds. Moreover, there is $\rho_{\eps}\in V_\alpha$, a probability Borel measure $\nu_{\eps}$ and linear operators $Q_{\eps}:V_\alpha\to V_\alpha$ such that
\begin{equation}\label{eq:specdec}
\lambda_{\eps}^{-1}\hat{P}_{\eps}=\rho_{\eps}\otimes\nu_{\eps}+ Q_{\eps}
\end{equation}
with
\begin{itemize}
\item 
$\hat{P}_{\eps} \rho_{\eps}=\lambda_{\eps}\rho_{\eps},\, \nu_{\eps}\hat{P}_{\eps}=\lambda_{\eps}\nu_{\eps},\, Q_{\eps}\rho_{\eps}=0,\, \nu_{\eps}Q_{\eps}=0$;
\item $\sum_{n=0}^{\infty}\sup_{\eps}\| Q_{\eps}^n\|_{\alpha}<\infty$;
\item $\int\rho_{\eps}dx=1$ and $\sup_{\eps}\|\rho_{\eps}\|_{\alpha}<\infty$.
\end{itemize}
We assume the normalization $\nu_{0}(\rho_{\eps})=1$ for all $\eps \geq 0$.

We now state some more specific bounds for the above defined quantities. Observing the second item, the spectral radius of $Q_{\eps}$ is some $K_\eps(L)\in (0,1)$, so we have
\begin{equation} \label{eq:Qbound}
 \sum_{k=0}^{\infty}\|Q_{\eps}^k\|_\alpha\le \frac{1}{1-K_\eps(L)}\le  \frac{1}{1-K(L)}
\end{equation}
for some $K(L)$ that is independent of $\eps.$
Moreover, for the last item, since $\sigma$ is a uniform upper bound on the essential spectral radii of $\hat P_\eps$, we can fix $r\in(\sigma,1)$ and $\delta_0\in(0,1-r)$, both $r$ and $\delta_0$ are independent of $L$, so that $B(0,r)$ contains the essential spectrum of $\hat P_\eps$ and $\lambda_\eps\in B(1,\delta_0)$. Then using the fact that  $\rho_\eps$ is the non-negative eigenfunction corresponding to the dominant real eigenvalue $\lambda_\eps$, and $\int\rho_\eps=1$, by Lemma \ref{lem:A2A3}, we have
$$\lambda_\eps \|\rho_\eps\|_\alpha=\|\hat P_\eps\rho_\eps\|_\alpha\le\sigma\|\rho_\eps\|_\alpha+ B(L, \eps,\omega_0).$$
Therefore,
\begin{equation} \label{eq:rhobound}
\|\rho_\eps\|_\alpha \le\frac{ B(L, \eps,\omega_0)}{\lambda_\eps-\sigma}\le \frac{ C}{r-\sigma} \quad \text{for $\eps$ sufficiently small}
\end{equation}
by \eqref{eq:B}.

To obtain the first order approximation of $\lambda_\eps$ we proceed as in \cite{KL09'}. Notice that 
\[
   \int_X(\hat{P}_{\varepsilon}-P_{T_0})f dx=\int_X 1_{ H_\eps}f dx. 
\]
Consequently,
\begin{equation*}
\eta_{\eps}:=\sup_{\|f\|_{\alpha}\le 1}|\int_X(\hat{P}_{\varepsilon}-P_{T_0})f dx|\le  C(L,\omega_0)\int_X 1_{ H_\eps}dx \to 0\text{ as }\eps\to 0,
\end{equation*}
where we used that $\|f\|_{\infty} \leq C(L,\omega_0)\|f\|_{\alpha}$ by inequality \eqref{eq:Sausinfty}. In particular we obtained that
\begin{equation} \label{eq:etaO}
\eta_{\eps} =O(C(L,\omega_0)L\eps^2)
\end{equation}

As $X^0_{\eps}\cup H_{\eps} = X$, it holds that $|1_{H_{\eps}}|_{\alpha}=|1_{X^0_{\eps}}|_{\alpha}$. Thus we obtain the analogue of \eqref{eq:LY2b}
\[
\|1_{H_{\eps}}f\|_{\alpha}\leq \|f\|_{\alpha}+ 8Ld\eps\omega_0^{1-\alpha}\|f\|_1,
\]
and in particular
\[
\|1_{H_{\eps}}\rho_0\|_{\alpha}\leq \|\rho_0\|_{\alpha}+ 8Ld\eps\omega_0^{1-\alpha}.
\]
Using this and \eqref{eq:LY2a} we obtain
 \begin{equation*}
\begin{split}
    \|P_{T_0}(1_{ H_\eps}\rho_0)\|_{\alpha} &\leq \sigma\|1_{ H_\eps}\rho_0\|_\alpha+K\\
    &\le \sigma\left(\|\rho_0\|_\alpha +8Ld\eps\omega_0^{1-\alpha}\right) +  K\\
    &= \sigma \|\rho_0\|_\alpha + B(L,\varepsilon,\omega_0)\\
    &\le \frac{C\sigma}{r-\sigma} + C=\frac{Cr}{r-\sigma}=:C_1
\end{split}    
\end{equation*}
Recall further that $\underline{C}:=\inf_x h(x)>0$. Then
\begin{align*}
&\eta_{\eps}\cdot \|(\hat{P}_{\varepsilon}-P_{T_0})\rho_0\|_{\alpha}\\
&\le  C(L,\omega_0)\int_X 1_{H_\eps}dx\cdot \|P_{T_0}(1_{ H_\eps}\rho_0)\|_{\alpha} \le  C_1C(L,\omega_0) \int_X 1_{ H_\eps}dx \\
&\le\frac{ C_1C(L,\omega_0)}{\underline{C}}\int_X 1_{ H_\eps}\rho_0dx=\frac{ C_1C(L,\omega_0)}{\underline{C}}\cdot\int_X(\hat{P}_{\varepsilon}-P_{T_0})\rho_0 dx 
\end{align*}
Thus, we obtained
\begin{equation} \label{eq:eta2O}
\eta_{\eps}\cdot \|(\hat{P}_{\varepsilon}-P_{T_0})\rho_0\|_{\alpha} = O(C(L,\omega_0)) \int_X(\hat{P}_{\varepsilon}-P_{T_0})\rho_0 dx.
\end{equation}
We now give a bound on $|1-\nu_{\eps}(\rho_0)|$ following the proof of \cite[Lemma 6.1 (a)]{KL09'}. We will repeatedly use the fact that $(1-\lambda_{\eps}^{-1}\hat P_{\eps})(\lambda_{\eps}^{-1}\hat P_{\eps})^k(\rho_0)=(1-\lambda_{\eps}^{-1}\hat P_{\eps})Q_{\eps}^k(\rho_0)$ for all $k \geq 0$.

 Recall the normalization $\nu_0(\rho_{\varepsilon})=1$ for all $\varepsilon \geq 0$, and that $\lambda_{\eps}\hat{P}_{\eps}=\rho_{\eps} \otimes \nu_{\eps}+Q_{\eps}$. Then for all $n \in \NN$
\[
\nu_{\eps}(\rho_0)\rho_{\eps}=(\lambda_{\eps}\hat{P}_{\eps})^n (\rho_0)-Q^n_{\eps}(\rho_0), \\
\]
which implies 
\[
\nu_{\eps}(\rho_0)=\nu_0((\lambda_{\eps}\hat{P}_{\eps})^n (\rho_0)-Q^n_{\eps}(\rho_0)).
\]
Thus we can write
\begin{align*}
|1-\nu_{\eps}(\rho_0)|&=\lim_{n \to \infty}|\nu_0(\rho_0-(\lambda_{\eps}^{-1}\hat P_{\eps})^n(\rho_0))|  \\
&= \left |\sum_{k=0}^{n-1} \nu_0([(\lambda_{\eps}^{-1}\hat P_{\eps})^{k}-(\lambda_{\eps}^{-1}\hat P_{\eps})^{k+1}](\rho_0)) \right| \\
&\leq \sum_{k=0}^{n-1}| \nu_0([(1-\lambda_{\eps}^{-1}\hat P_{\eps})(\lambda_{\eps}^{-1}\hat P_{\eps})^k](\rho_0))| \\
&\leq \sum_{k=0}^{\infty}|\nu_0((1-\lambda_{\eps}^{-1}\hat P_{\eps})Q_{\eps}^k(\rho_0))|
\end{align*}
as $(1-\lambda_{\eps}^{-1}\hat P_{\eps})(\lambda_{\eps}^{-1}\hat P_{\eps})^k=(1-\lambda_{\eps}^{-1}\hat P_{\eps})Q_{\eps}^k$.
Thus
\begin{align*}
|1-\nu_{\eps}(\rho_0)| & \leq \sum_{k=0}^{\infty}|\nu_0((1-\lambda_{\eps}^{-1}\hat P_{\eps})Q_{\eps}^k(\rho_0))| \\
&\leq \sum_{k=0}^{\infty}|\nu_0((P_{T_0}-\hat P_{\eps})Q_{\eps}^k(\rho_0))|+\sum_{k=0}^{\infty}|\nu_0((\hat{P}_{\eps}-\lambda_{\eps}^{-1}\hat P_{\eps})Q_{\eps}^k(\rho_0))| \\
&\leq \sum_{k=0}^{\infty}|\nu_0((P_{T_0}-\hat P_{\eps})Q_{\eps}^k(\rho_0))|+|\lambda_{\eps}-1|\sum_{k=0}^{\infty}|\nu_0((\lambda_{\eps}^{-1}\hat P_{\eps})Q_{\eps}^k(\rho_0))| \\
&= \sum_{k=0}^{\infty}|\nu_0((P_{T_0}-\hat P_{\eps})Q_{\eps}^k(\rho_0))|+|\lambda_{\eps}-1|\sum_{k=0}^{\infty}|\nu_0(Q_{\eps}^{k+1}(\rho_0))| \\
&\leq \eta_{\eps} \sum_{k=0}^{\infty}\|Q_{\eps}^k\|_\alpha\cdot \|\rho_0\|_\alpha+ |1-\lambda_{\eps}|\|\nu_0\|_{\alpha} \sum_{k=1}^{\infty}\|Q_{\eps}^k\|_\alpha\cdot \|\rho_0\|_\alpha \\
&\leq \frac{C}{1-K(L)}\left(\eta_{\eps} + |1-\lambda_{\eps}|\right)
\end{align*}
Since $|1-\lambda_{\eps}|=|\nu_0((P_{T_0} -\hat P_{\eps})(\rho_{\eps}))|\leq \|\rho_{\eps}\|_{\alpha}\eta_{\eps} \leq C\eta_{\eps}$ (by \eqref{eq:rhobound}) we obtain that
\[
|1-\nu_{\eps}(\rho_0)| \leq \frac{C\eta_{\eps}}{1-K(L)}.
\]
Finally, by \eqref{eq:Qbound} and \eqref{eq:rhobound} we have
\begin{equation} \label{eq:1minmu0}
|1-\nu_{\eps}(\rho_0)|=O\left(\frac{C(L,\omega_0)L\eps^2}{1-K(L)}\right)
\end{equation}
by \eqref{eq:etaO}.

In what follows, we bound $\|Q_{\eps}^N(\rho_0)\|_{\alpha}$, $N \geq 0$ with the help of the tail-sum $\kappa_N=\sum_{n=N}^{\infty}\sup_{\eps}\|Q_{\eps}^n\|_{\alpha}$. The argument is analogous to \cite[Lemma 6.1 (b)]{KL09'}.
\begin{align*}
&\|Q_{\eps}^N(\rho_0)\|_{1} \\
&\leq \limsup_{n \to \infty}\|Q_{\eps}^N(\rho_0-(\lambda_{\eps}^{-1}\hat P_{\eps})^n(\rho_0))\|_{1}+\limsup_{n \to \infty}\|Q_{\eps}^{N+n}(\rho_0)\|_{\alpha} \\
&\leq \limsup_{n \to \infty}\left \|Q_{\eps}^N\left(\sum_{k=0}^{n-1}(\lambda_{\eps}^{-1}\hat P_{\eps})^k\rho_0-(\lambda_{\eps}^{-1}\hat P_{\eps})^{k+1}(\rho_0)\right)\right\|_{1}+\limsup_{n \to \infty}\kappa_{N+n}\|\rho_0\|_{\alpha} \\
&\leq \sum_{k=0}^{\infty} \|Q_{\eps}^N(\lambda_{\eps}^{-1}\hat P_{\eps})^k(1-\lambda_{\eps}^{-1}\hat P_{\eps})(\rho_0)\|_{1}+\limsup_{n \to \infty}\kappa_{N+n}\|\rho_0\|_{\alpha} \\
&\leq \sum_{k=0}^{\infty} \|Q_{\eps}^{N+k}(P_{T_0}-\hat P_{\eps}+(1-\lambda_{\eps}^{-1})\hat P_{\eps})(\rho_0)\|_{1} \\
&\leq \sum_{k=0}^{\infty} (\|Q_{\eps}^{N+k}(P_{T_0}-\hat P_{\eps})(\rho_0)\|_1+|1-\lambda_{\eps}^{-1}|\cdot \|Q_{\eps}^{N+k+1}(\rho_0)\|_{1}) \\
&\leq \sum_{k=0}^{\infty}\|Q_{\eps}^{N+k}\|_{\alpha}(\|(P_{T_0}-\hat P_{\eps})(\rho_0)\|_{\alpha}+|1-\lambda_{\eps}|\cdot \|\rho_0\|_{\alpha}),
\end{align*}
thus
\begin{equation} \label{eq:QNbound}
\|Q_{\eps}^N(\rho_0)\|_{1} = O(\kappa_N) (\|(P_{T_0}-\hat P_{\eps})(\rho_0)\|_{\alpha}+|1-\lambda_{\eps}|)
\end{equation}
by using \eqref{eq:rhobound}.

We now prove the formula for $\lambda_{\eps}$ as stated in item 1 of Theorem \ref{thm:global}. We follow the proof of \cite[Theorem 2.1]{KL09'} while tracking the $L$ dependence of the constants. First notice that $\nu_0((P_{T_0} - \hat P_{\eps})(\rho_0))=\mu_{0}(H_\eps)$.
\begin{align*}
&\nu_{\eps}(\rho_0)(1-\lambda_{\eps}) \\
&=(1-\nu_\eps(\rho_0))\nu_0((P_{T_0} - \hat P_{\eps})(\rho_\eps)) \\
&=\mu_{0}(H_\eps)-\nu_0((P_{T_0} - \hat P_{\eps})(1-(\lambda_{\eps}^{-1}\hat P_{\eps})^n)(\rho_0))-\nu_0((P_{T_0} - \hat P_{\eps})Q_{\eps}^n(\rho_0)) \\
&= \mu_{0}(H_\eps)-\sum_{k=0}^{n-1}\nu_0((P_{T_0} - \hat P_{\eps})(\lambda_{\eps}^{-1}\hat P_{\eps})^k(1-\lambda_{\eps}^{-1}\hat P_{\eps})(\rho_0))+O(\eta_{\eps}\|Q_{\eps}^n\rho_0\|_{1}) \\
&= \mu_{0}(H_\eps)-\sum_{k=0}^{n-1}\nu_0((P_{T_0} - \hat P_{\eps})(\lambda_{\eps}^{-1}\hat P_{\eps})^k(P_{T_0}-\hat P_{\eps})(\rho_0))\\
&+(1-\lambda_{\eps})\sum_{k=0}^{n}\nu_0((P_{T_0} - \hat P_{\eps})(\lambda_{\eps}^{-1}\hat P_{\eps})^k(\rho_0))\\
&+O(\kappa_n)(O(C(L,\omega_0))\mu_{0}(H_\eps)+\eta_{\eps}|1-\lambda_{\eps}|)
\end{align*}
where for the error term we used \eqref{eq:QNbound}, then \eqref{eq:eta2O}. Continuing,
\begin{align*}
\nu_{\eps}(\rho_0)(1-\lambda_{\eps})&=\mu_{0}(H_\eps)\left(1-\sum_{k=0}^{n-1}\lambda_{\eps}^{-k}q_{k,\eps} \right) \\
&+O(\eta_{\eps})|1-\lambda_{\eps}|\sum_{k=0}^{n-1}(|\nu_{\eps}(\rho_0)| \cdot \|\rho_{\eps}\|_1+\|Q_{\eps}^k\rho_0\|_1)\\
&+O(\kappa_n)(O(C(L,\omega_0))\mu_{0}(H_\eps)+\eta_{\eps}|1-\lambda_{\eps}|)
\end{align*}
where we used the notation
\[
q_{k,\eps}=\frac{\nu_0((P_{T_0} - \hat P_{\eps})\hat P_{\eps}^k(P_{T_0}-\hat P_{\eps})(\rho_0))}{\nu_0((P_{T_0} - \hat P_{\eps})(\rho_0))}.
\]
Using \eqref{eq:1minmu0}, \eqref{eq:rhobound} and \eqref{eq:Qbound}, we can write this as
\begin{align}
 \left(1+O\left(\frac{C\left(L,\omega_0\right)L\eps^2}{1-K(L)}\right)\right)&(1-\lambda_{\eps})(1+nO(\eta_{\eps}))\nonumber\\
&=\mu_{0}(H_\eps)\left(1-\sum_{k=0}^{n-1}\lambda_{\eps}^{-k}q_{k,\eps} \right) 
+O(\kappa_n)C(L,\omega_0)\mu_{0}(H_\eps) \nonumber\\
 \left(1+O\left(\frac{C\left(L,\omega_0\right)L\eps^2}{1-K(L)}\right)\right)(1&-\lambda_{\eps})(1+nO(C(L,\omega_0)L\eps^2))\nonumber \\
&=\mu_{0}(H_\eps)\left(1-\sum_{k=0}^{n-1}\lambda_{\eps}^{-k}q_{k,\eps} \right)
+O(\kappa_n)C(L,\omega_0)\mu_{0}(H_\eps) \label{eq:Lerror}
\end{align}
Assuming the limits $\lim_{\eps \to 0}q_{k,\eps}=:q_k$ exist, this implies that
\[
\lim_{\eps \to 0}\frac{1-\lambda_{\eps}}{\mu_{0}(H_\eps)}=1-\sum_{k=0}^{n-1}q_{k}+O(\kappa_n),
\]
and finally taking $n \to \infty$ gives the formula
\begin{equation} \label{eq:final}
\lim_{\eps \to 0}\frac{1-\lambda_{\eps}}{\mu_{0}(H_\eps)}=\theta
\end{equation}
by using the notation $\theta=1-\sum_{k=0}^{\infty}q_{k}$.

We now compute the values of $q_{k}$, $k \geq 0$ to obtain an explicit formula for $\theta$. To this end, we recall from \cite{K12} that $q_{k,\eps}$ can be computed as
\begin{align}
q_{k,\varepsilon}&=\frac{1}{\mu_0(H_{\varepsilon})}\int_{H_{\varepsilon}}\hat{P}_{\varepsilon}^kP_{T_0}(1_{H_{\varepsilon}}\rho_0)dx \nonumber \\
&=\frac{\mu_0(H_{\varepsilon}\cap T_0^{-1}H_{\varepsilon}^c\cap \dots T_0^{-k}H_{\varepsilon}^c \cap T_0^{-(k+1)}H_{\varepsilon})}{\mu_0(H_{\varepsilon}) } \label{eq:qkeps}
\end{align}

Recall further that $H_{\eps}=\cup_{\v \in V^+}\cup_{\p \in \Lambda}H_{\varepsilon}^{\p,\v}$ where 
$$
H_{\varepsilon}^{\p,\v}=\{x_{\p} \in A_{\varepsilon,\v}, x_{\p+\v} \in A_{\varepsilon,-\v}, x_{\q} \in I \text{ for } \q \neq \p, \p+\v \},
$$
and $I_{\p,\v}=\lim_{\varepsilon \to 0}H_{\varepsilon}^{\p,\v}$; i.e.,
\[
I_{\p,\v}=\{x_{\p}=a_\v, x_{\p+\v}= a_{-\v}, x_{\q} \in I \text{ for } \q \neq \p, \p+\v\},
\]
where $a_{\v}=\lim_{\varepsilon \to 0}A_{\varepsilon,\v}$ and $a_{-\v}=\lim_{\varepsilon \to 0}A_{\varepsilon,-\v}$.

Consider first the case when none of the elements of $S=\{(a_{\v}, a_{-\v})\}_{\v \in V^+}$ are periodic points, that is, $S^{per}=\emptyset$. Then by the expression \eqref{eq:qkeps} it is evident that in the $\eps \to 0$ limit we get $q_k=0$ for all $k \geq 0$. As a consequence, $\theta=1$. 

For each $(a_{\v},a_{-\v}) \in S^{per}$ recall that $k(\v)$ is the smallest integer such that $T_{2,0}^{k+1}(a_{\v},a_{-\v}) \in S$, and $V_k^+ \subset V^+$ is the set of directions for which $k(\v)=k$ and $K=\{k(\v)\}_{\v \in V^+}$. Notice that $S$ can be divided into periodic orbits, where the first preimage of each point intersecting $S$ is well-defined. We will denote the said preimage of $(a_{\v},a_{-\v})$ by $(\tau_*^{-(k'+1)}(a_{\v}),\tau_*^{-(k'+1)}(a_{-\v}))$. Consider now some $k \in K$. Computing $q_{k,\varepsilon}$, we get
\begin{align*}
q_{k,\varepsilon}&=\frac{1}{\mu_0(H_{\varepsilon})}\int_{H_{\varepsilon}}P_{\varepsilon}^kP_{T_0}(1_{H_{\varepsilon}}\rho_0)dx \\
&=\frac{1}{\mu_0(H_{\varepsilon})}\int_{H_{\eps}^k}\sum_{y \in T_0^{-(k+1)}(x)}\frac{\rho_0 (y)}{\|D T_0^{k+1}y\|}dx \\
&=\frac{1}{\mu_0(H_{\varepsilon})}\int_{H_{\eps}^k}\sum_{y \in T_0^{-(k+1)}(x)}\frac{\prod_{\p \in \Lambda}h(y_{\p})}{\prod_{\p \in \Lambda}|(\tau^{k+1})'(y_{\p})|}dx
\end{align*}
where $H_{\eps}^k=H_{\varepsilon}\cap T_0(H_{\varepsilon}^c)\cap\dots\cap T_0^{k}(H_{\varepsilon}^c)\cap T_0^{k+1}(H_{\varepsilon})$.

As a notational ease write $$F(x)=\sum_{y \in T_0^{-(k+1)}(x)}\frac{\prod_{\p \in \Lambda}h(y_{\p})}{\prod_{\p \in \Lambda}|(\tau^{k+1})'(y_{\p})|}.$$
We remark that $F=P_{T_0^{ k+1}}\rho_0=\rho_0$, but we prefer to keep this expression for reasons that will become clear in what follows. 

Recall that we defined $\Lambda^{\ell}=\{(\p_1,\dots,\p_k): \p_i \in \Lambda,\: i=1,\dots,\ell\}$,  $(V^+)^{\ell}=\{(\v_1,\dots,\v_{\ell}): \v_i \in V^+,\: i=1,\dots,\ell\}$ and let $(V_k^+)^{\ell}=\{(\v_1,\dots,\v_{\ell}): \v_i \in V_k^+,\: i=1,\dots,\ell\}$. Recalling \eqref{eq:pmF} we can write
\begin{align} \label{eq:pmF2}
\int_{H_{\eps}^k} F(x) dx&=\sum_{\ell=1}^{\lfloor {L}/2 \rfloor}(-1)^{\ell+1}\sum_{\substack{\{\p_i\}_{i=1}^{\ell} \in \Lambda^{\ell} \\ \p_i  < \p_{i+1}}}\sum_{\{\v_i\}_{i=1}^{\ell} \in (V^+)^{\ell}}\int_{(\cap_{i=1}^{\ell} H_{\eps}^{\p_i,\v_i})\cap H_{\eps}^k}F(x)dx.
\end{align}

Let us introduce the notation $x_{\neq \{\p_i,\p_i+\v_i\}_{i=1}^{\ell}}$ for the $(L-2\ell)$-dimensional vector that we get from $\{x_{\q}\}_{\q \in \Lambda}$ by discarding the coordinates $x_{\{\p_i,\p_i+\v_i\}_{i=1}^{\ell}}$. Assume $\{\v_i\}_{i=1}^{\ell} \in (V_k^+)^{\ell}$. Then $(\cap_{i=1}^{\ell} H_{\eps}^{\p_i,\v_i}) \cap H_{\eps}^k=\cap_{i=1}^{\ell} H_{\eps}^{\p_i,\v_i}$ for small enough $\varepsilon$. We now compute the limit
\[
\lim_{\eps \to 0}\frac{\mu_0(\cap_{i=1}^{\ell} H_{\eps}^{\p_i,\v_i})}{\mu_0(H_{\eps})} \cdot \frac{1}{\mu_0(\cap_{i=1}^{\ell} H_{\eps}^{\p_i,\v_i})}\int_{\cap_{i=1}^{\ell} H_{\eps}^{\p_i,\v_i}}F(x)dx.
\]
Since $F$ is continuous at $x \in S$ (based on the continuity assumptions and $\tau$ and $\tau'$, implying the continuity of $h$), we obtain
\begin{align*}
&\lim_{\eps \to 0} \frac{1}{\mu_0(\cap_{i=1}^{\ell} H_{\eps}^{\p_i,\v_i})}\int_{\cap_{i=1}^{\ell} H_{\eps}^{\p_i,\v_i}}F(x)dx \\
&=\prod_{i=1}^{\ell}\frac{h(\tau_*^{-(k+1)}(a_{\v_i}))h(\tau_*^{-(k+1)}(a_{-\v_i}))}{|(\tau^{k+1})'(\tau_*^{-(k+1)}(a_{\v_i}))(\tau^{k+1})'(\tau_*^{-(k+1)}(a_{-\v_i}))|}\cdot\frac{1}{h(a_{\v_i})h(a_{-\v_i})} \\
&\times \int\displaylimits_{I^{{L}-{2}\ell}}\sum_{y \in T_0^{-(k+1)}(x)}\prod_{\substack{\q \in \Lambda \\ \q \notin \{\p_i,\p_i+\v_i\}_{i=1}^{\ell}}}\frac{h(y_\q)}{|(\tau^{k+1})'(y_{\q})|}dx_{\neq \{\p_i,\p_i+\v_i\}_{i=1}^{\ell}} \\
&\times  \chi(\cap_{i=1}^{\ell} I_{\p_i,\v_i})
\end{align*}
where $\chi(A)=1$ if $A$ is nonempty and $\chi(A)=0$ otherwise. Notice that
\[
\int\displaylimits_{I^{{L}-{ 2}\ell}}\sum_{y \in T_0^{-(k+1)}(x)}\prod_{\substack{\q \in \Lambda \\ \q \notin \{\p_i,\p_i+\v_i\}_{i=1}^{\ell}}}\frac{h(y_\q)}{|(\tau^{k+1})'(y_{\q})|}dx_{\neq \{\p_i,\p_i+\v_i\}_{i=1}^{\ell}}=1
\]
by integrating successively according to each variable $x_{\q}$ and using that $P_{\tau}h=h$ and $\int_I h=1$. Then we can write
\begin{align*}
&\lim_{\eps \to 0} \frac{1}{\mu_0(\cap_{i=1}^{\ell} H_{\eps}^{\p_i,\v_i})}\int_{\cap_{i=1}^{\ell} H_{\eps}^{\p_i,\v_i}}F(x)dx \\
&=\prod_{i=1}^{\ell}\frac{h(\tau_*^{-(k+1)}(a_{\v_i}))h(\tau_*^{-(k+1)}(a_{-\v_i}))}{|(\tau^{k+1})'(\tau_*^{-(k+1)}(a_{\v_i}))(\tau^{k+1})'(\tau_*^{-(k+1)}(a_{-\v_i}))|}\cdot\frac{1}{h(a_{\v_i})h(a_{-\v_i})} \\
&\times \chi(\cap_{i=1}^{\ell} I_{\p_i,\v_i}).
\end{align*}
Notice that $(\tau_*^{-(k+1)}(a_{\v_i}),\tau_*^{-(k+1)}(a_{-\v_i}))=(a_{\v_j},a_{-\v_j})$ for some $\v_j \in V_k^+$.

On the other hand, using \eqref{eq:pm} we obtain
\begin{align*}
&\lim_{\eps \to 0}\frac{\mu_0(\cap_{i=1}^{\ell} H_{\eps}^{\p_i,\v_i})}{\mu_0(H_{\eps})}\\
&=\lim_{\eps \to 0}\frac{\mu_0(\cap_{i=1}^{\ell} H_{\eps}^{\p_i,\v_i})}{\sum_{k=1}^{\lfloor {L}/2 \rfloor}(-1)^{k+1}\sum_{\substack{\{\p_i\}_{i=1}^{k} \in \Lambda^k \\ {\p_i < \p_{i+1}}}}\sum_{\{\v_i\}_{i=1}^{k} \in (V^+)^k}\mu_0\left(\bigcap_{i=1}^k H_{\eps}^{\p_i,\v_i}\right)} \\
&=\begin{cases}
\frac{h(a_{\v_1})h(a_{-\v_{1}})}{\sum_{\p \in \Lambda}\sum_{\v \in V^+}h(a_{\v})h(a_{-\v})} & \quad \text{if $\ell =1$,} \\
0 & \quad \text{otherwise.}
\end{cases}
\end{align*}
The reason we get zero in the limit for $\ell > 1$ is that  $m_{L-2}(\cap_{i=1}^{\ell} I_{\eps}^{\p_i,\v_i})=0$ while $m_{L-2}( I_{\eps}^{\p_i,\v_i}) > 0$ (we have $m_{L-2}( I_{\eps}^{\p_i,\v_i})=1$ in fact.) This implies \begin{align*}
&\lim_{\eps \to 0}\frac{\mu_0(\cap_{i=1}^{\ell} H_{\eps}^{\p_i,\v_i})/\eps^2}{\sum\limits_{k=1}^{\lfloor {L}/2 \rfloor}(-1)^{k+1}\sum\limits_{\substack{\{\p_i\}_{i=1}^{k} \in \Lambda^k \\ { \p_i < \p_{i+1}}}}\sum\limits_{\{\v_i\}_{i=1}^{k} \in (V^+)^k}\mu_0\left(\bigcap\limits_{i=1}^k H_{\eps}^{\p_i,\v_i}\right)/\eps^2} \\
&=\lim_{\eps \to 0}\frac{\mu_0(\cap_{i=1}^{\ell} H_{\eps}^{\p_i,\v_i})/\eps^{2^{\ell}}\cdot \eps^{2(\ell-1)}}{\sum\limits_{k=1}^{\lfloor {L}/2 \rfloor}(-1)^{k+1}\sum\limits_{\substack{\{\p_i\}_{i=1}^{k} \in \Lambda^k \\ {\p_i < \p_{i+1}}}}\sum\limits_{\{\v_i\}_{i=1}^{k} \in (V^+)^k}\mu_0\left(\bigcap\limits_{i=1}^k H_{\eps}^{\p_i,\v_i}\right)/\eps^{2^{k}}\cdot \eps^{2(k-1)}} \\
&=\lim_{\eps \to 0}\frac{\prod\limits_{i=1}^{\ell}h(a_{\v_i})h(a_{-\v_i}) \eps^{2(\ell-1)}}{\sum\limits_{k=1}^{\lfloor {L}/2 \rfloor}(-1)^{k+1}\sum\limits_{\substack{\{\p_i\}_{i=1}^{k} \in \Lambda^k \\ {\p_i < \p_{i+1}}}}\sum\limits_{\{\v_i\}_{i=1}^{k} \in (V^+)^k}\prod\limits_{i=1}^{k}h(a_{\v_i})h(a_{-\v_i}) \eps^{2(k-1)}} \\
&=\frac{0}{\sum_{\p \in \Lambda}\sum_{\v \in V^+}h(a_{\v})h(a_{-\v})+0}=0.
\end{align*}
To finish the proof, consider \eqref{eq:pmF2}, keep only the term $\ell =1$ and repeating this computation for each $\p \in \Lambda$ and $\v \in V_k^+$, we obtain
\begin{align*}
q_k&=\frac{1}{\sum_{\p \in \Lambda}\sum_{\v \in V^+}h(a_{\v})h(a_{-\v})}\sum_{\p \in \Lambda} \sum_{\v \in V_k^+}\frac{h(a_{\v})h(a_{-\v})}{|(\tau^{k+1})'(a_{\v})(\tau^{k+1})'(a_{-\v})|} \\
&=\frac{1}{\sum_{\v \in V^+}h(a_{\v})h(a_{-\v})} \sum_{\v \in V_{k}^+}\frac{h(a_{\v})h(a_{-\v})}{|(\tau^{k+1})'(a_{\v})(\tau^{k+1})'(a_{-\v})|}.
\end{align*}
Finally,
\begin{align}
\theta&=1-\sum_{k \in K} q_k \nonumber \\
&=1-\frac{1}{\sum_{\v \in V^+}h(a_{\v})h(a_{-\v})} \sum_{k \in K}\sum_{\v \in V_{k}^+}\frac{h(a_{\v})h(a_{-\v})}{|(\tau^{k+1})'(a_{\v})(\tau^{k+1})'(a_{-\v})|} \label{eq:thetap}.
\end{align}

Lemma \ref{lem:mu0H} readily implies that $\mu_0(H_\eps)={L}\Xi_{\eps}(1+o(1)) \sim \eps^2 {L} d$ as
\[
d\underline{C}^2\eps^2 \leq \sum_{\v \in V^+}\int_{A_{\eps,\v}}\int_{A_{\eps,-\v}}h(x_1)h(x_2)dx_1dx_2 \leq d\overline{C}^2\eps^2
\]
where $\overline{C},\underline{C}$ are the respective upper and lower bounds on $h$ noted previously.  The $o(1)$ error term is understood in the limit $L\eps^2 \to 0$, which for fixed $L$ is equivalent to $\eps \to 0$.

To prove the second item, first notice that

\begin{align*}
m_{L}(X_{\eps}^{n-1})&=\int_X 1_{X_{\eps}^{n-1}}(x)dx=\int_X P_{T_0}^{n}(1_{X_{\eps}^{n-1}}(x))dx=\int_X\hat{P}_{\eps}^{n}1(x)dx\\
&=\lambda^n_\eps\left[\int_X\rho_\eps\otimes\nu_{\eps}(1) dx + \int_X Q^n_\eps 1(x) dx\right],
\end{align*}
where in the last step we have used the spectral decomposition \eqref{eq:specdec}. Consequently,
$$
\frac{1}{n}\ln \left(m_{L}(X_{\eps}^{n-1})\right)-\ln\lambda_\eps=\frac1n\ln\left[\int_X\rho_\eps\otimes\nu_{\eps}(1) dx + \int_X Q^n_\eps 1(x) dx\right].
$$
Since $\|Q_\eps\|_1\le \|Q_\eps\|_{\alpha}<1$, $\exists C>0$ such that
$$
\left|\frac{1}{n}\ln \left(m_{L}(X_{\eps}^{n-1})\right)-\ln\lambda_\eps\right|\le\frac{C}{n}.$$
 Hence, 
 $$\lim_{n\to\infty}\frac{1}{n}\ln \left(m_{L}(X_{\eps}^{n-1})\right)=\ln\lambda_\eps$$
 and the first collision rate, with respect to $m_L$ in the $L$-dimensional system is given by $\hat r_L=-\ln\lambda_\eps$.
 Using the asymptotics of $\lambda_\eps$, we have 
\begin{equation*}
\begin{split}
\lambda_{\varepsilon}&=1-\mu_0(H_\eps)\cdot\theta(1+o(1))\\
&=e^{-\mu_0(H_\eps)\cdot\theta(1+ o(1))}
\end{split}
\end{equation*}
This implies
\begin{equation}\label{eq:Lrate}
\hat r_L= \mu_0(H_\eps) \cdot\theta(1+o(1)).
\end{equation}

%\begin{align*}
%\mstrike{\mu_{\eps}(X_{\eps}^n)}&=\mstrike{\int_X 1_{X_{\eps}^n}\rho_{\eps}(x)dx=\int_X P_{T_0}^{n+1}(1_{X_{\eps}^n}\rho_{\eps})(x)dx=\int_X\hat{P}_{\eps}^{n+1}\rho_{\eps}(x)dx}\\
%&=\mstrike{\lambda_{\eps}^{n+1},}
%\end{align*}
%\strike{which implies that the first collision rate in the $N$-dimensional system is given by}
%$$\mstrike{-\lim_{n \to\infty}\frac{1}{n}\ln \mu_{\eps}(X_{\eps}^n)=-\ln\lambda_\eps.}$$

The proof of the third item of the theorem follows from item 1 along the lines of Proposition 2 of \cite{K12}.

 We choose $N$ such that $N\eta_{\eps}+O(C(L,\omega_0)\kappa_N)$ is minimal. Denote
 \begin{equation} \label{eq:zeta}
 \zeta(L)=|\log (1-K(L))|
 \end{equation}
 where $K(L)$ was defined by \eqref{eq:Qbound}. Thus $N=O\left(\frac{1}{\zeta(L)} \cdot \left| \log \left( \frac{\eta_{\eps}}{C(L,\omega_0)\zeta(L)}\right)\right|\right)$. Set  $\delta_{\eps}=O\left(\frac{\eta_{\eps}}{\zeta(L)}\left| \log \frac{\eta_{\eps}}{C(L,\omega_0)\zeta(L)}\right|\right)$. 

Let $\xi_{\eps}=\theta_{N,\eps}+O(C(L,\omega_0)\kappa_N)$ where $\theta_{N,\eps}=\sum_{k=0}^{N-1} \lambda_{\eps}^{-k}q_{k,\eps}$. By \eqref{eq:Lerror} we obtain that
\begin{align*}
\lambda_{\eps}&=1-\mu_0(H_{\eps})\xi_{\eps} (1+O(N\eta_{\eps})) \\
&=exp[-\mu_0(H_{\eps})\xi_{\eps} (1+O(\delta_{\eps}))]
\end{align*}
As $\lambda_{\eps}=1-\mu_0(H_{\eps})(\theta+o(1))$ by \eqref{eq:final} we also have $\lim_{\eps \to 0}\xi_{\eps}=\theta > 0$, in particular $\xi_{\eps} > 0$ for $\eps$ small enough.

Let $n=\lfloor t/(\xi_{\eps}\mu_0(H_{\eps})) \rfloor$. First assume that $n > \left|\frac{\log \eta_{\eps}}{\zeta(L)} \right|$. Then as argued in \cite[Equation (26)]{K12}, we can write
\begin{align*}
\mu_0\Big\{t_\eps\ge\frac{t}{\xi_\eps\mu_0(H_\eps)}\Big\}&=\lambda_{\eps}^{n+q}(\nu_{\eps}(\rho_0)+O(\|Q_\eps^n\|_{\alpha})) \\
&=exp[-t(1+O(\delta_{\eps}))]\cdot \left(1+O\left(\frac{C\left(L,\omega_0\right)L\eps^2}{1-K(L)}\right)\right)
\end{align*}
where in the last line we used \eqref{eq:1minmu0}. So for $\eps$ small enough we have
\begin{align*}
\left|\mu_0\Big\{t_\eps\ge\frac{t}{\xi_\eps\mu_0(H_\eps)}\Big\}-e^{-t}\right| &\leq  C(t\vee 1)e^{-t} \delta_{\eps} \\
&\leq  C(t\vee 1)e^{-t}  \frac{\eta_{\eps}}{\zeta(L)}\log \frac{\eta_{\eps}}{C(L,\omega_0)\zeta(L)} \\
&\leq  C(t\vee 1)e^{-t} C(L,\omega_0) \frac{L\eps^2}{\zeta(L)}\log \frac{L\eps^2}{\zeta(L)}.
\end{align*}
The case of  $n \leq \left|\frac{\log \eta_{\eps}}{\zeta(L)} \right|$ can be argued in the same way as in the proof of \cite[Proposition 2]{K12}.
\end{proof}

\begin{proof}[Proof of Theorem \ref{thm:perunit}]
To derive the expression in the theorem, we consider 
$$
\frac{\hat r_L}{\mu_0(H_\eps)}\cdot\frac{\mu_0(H_\eps)}
{{L}\Xi_\eps}
$$
as $\eps\to 0$. By equation \eqref{eq:escape0} 
\begin{equation*}
\frac{\hat r_L}{\mu_0(H_\eps)}= \theta(1+o(1))
\end{equation*}
in the limit as $\eps\to 0$. Moreover, equation \eqref{eq:escape} implies  $$\frac{\mu_0(H_{\eps})}{{L}\Xi_{\eps}}=1+o(1)$$ as $\eps\to 0$. This completes the proof.
\end{proof}
We finish the paper by a remark on considering the infinite dimensional limit of the collision rate and its per lattice unit counterpart.
\begin{remark}\label{rem:infinite}
If we want to study collision rates in the infinite limit of the system; i.e., as ${L}\to \infty$, we need an asymptotic formula for $\lambda_{\eps}$ in some kind of joint limit where $L \to \infty$ and $\eps \to 0$. To this end, recall \eqref{eq:Lerror} :
\begin{align*}
\left(1+O\left(\frac{C\left(L,\omega_0\right)L\eps^2}{1-K(L)}\right)\right)&\frac{1-\lambda_{\eps}}{\mu_{0}(H_\eps)}(1+nO(C(L,\omega_0)L\eps^2))\nonumber \\
&=1-\sum_{k=0}^{n-1}\lambda_{\eps}^{-k}q_{k,\eps}
+O(\kappa_n)C(L,\omega_0)
\end{align*}
The term $O(\kappa_n)C(L,\omega_0)$ causes serious difficulties in taking any kind of limit in which $L \to \infty$, as $C(L,\omega_0)$, defined by \eqref{eq:CL} diverges as $L \to \infty$. We note that the constant $C(L,\omega_0)$ appears in that term due to \eqref{eq:eta2O}, the analogue of (A6) in \cite{KL09'}.  In addition, the term $K(L)$ (a bound on the modulus of the second largest eigenvalue of $\hat{P}_{\eps}$) is also difficult to control.

%We might try proceeding by choosing $n=C(L,\omega_0)$. As $\kappa_n=O(\|Q_{\eps}^n\|_{\alpha})=O(\sigma^n)$, this gives
%\begin{align*}
%(1+O(C(L,&\omega_0)L\eps^2))(1-\lambda_{L,\eps})(1+O(C(L,\omega_0)^2L\eps^2))\\
%&=\mu_{0}(H_\eps)\left(1-\sum_{k=0}^{n-1}\lambda_{L,\eps}^{-k}q_{k,\eps} \right) +O(\sigma^{C(L,\omega_0)}C(L,\omega_0))\mu_{0}(H_\eps). 
%\end{align*}
%Let $\overset{*}{\lim}$ be the limit where we take $\eps \to 0$ and $L \to  \infty$ such that
%\begin{equation} \label{eq:f}
%f(L,\eps)=C(L,\omega_0)^2L\eps^2=C(L,\bar c L^{-1/2})^2L\eps^2 \to 0.
%\end{equation}
%Assuming the $\lim^*\sum_{k=0}^{C(L,\bar c L^{-1/2})-1}\lambda_{L,\eps}^{-k}q_{k,L,\eps}=:1-\theta^*$ exists we obtain that $$\lim^* \frac{1-\lambda_{L,\eps}}{\mu_{0}(H_\eps)}=\theta^*.$$ We saw that $|1-\lambda_{L,\eps}| =O(\eta_{L,\eps})=O(C(L,\omega_0)L\eps^2)$, which provides us that $\lambda_{\eps}^{-(C(L,\bar c L^{-1/2})-1)}=1+O(C(L,\omega_0)^2L\eps^2) \to 0$ as $f(L,\eps) \to 0$. So we need to understand if $\lim^*\sum_{k=0}^{C(L,\bar c L^{-1/2})-1}q_{k,L,\eps}$ exists. Assume $\lim^* q_{k,L,\eps}=q_k^*$ exists. If we could prove that for each $\delta > 0$, $\max_{k \leq C(L,\bar c L^{-1/2})-1}|q_{k,L,\eps}-q_k^*| < \frac{\delta}{C(L,\bar c L^{-1/2})}$ would be guaranteed by $f(L,\eps)$ being sufficiently small, we would obtain that $$\theta^*=1-\sum_{k=0}^{\infty}q_k^*.$$ However, proving the existence of $q_k^*$ and the rate of convergence $q_{k,L,\eps} \to q_k^*$ does not seem tractable in the generality of the present work.
\end{remark}

    \end{document}